\newtheorem{theorem}{Theorem}[section]
\newtheorem{corollary}{Corollary}[section]
\newtheorem{lemma}{Lemma}[section]
\def\F{{\mathbb F}}
\def\tF{\textsc{F}}
\def\O{{\mathcal O}}
\def\Q{{\mathbb Q}}
\def\V{\textsc{V}}
\def\Z{{\mathbb Z}}
\DeclareMathOperator{\coh}{H}
\DeclareMathOperator{\Frob}{Frob}
\DeclareMathOperator{\Gal}{Gal}
\DeclareMathOperator{\Ext}{Ext}
\DeclareMathOperator{\G_m}{G_m}
\DeclareMathOperator{\image}{Im}
\DeclareMathOperator{\Sel}{Sel}
\DeclareMathOperator{\Hom}{Hom}
\DeclareMathOperator{\coker}{coker}
\DeclareMathOperator{\spec}{spec}
\begin{document}
\title[Control Theorems over Global Function
Fields]{Control Theorems for
Abelian Varieties over Function
Fields}
\author{Ki-Seng Tan}
\address{Department of Mathematics\\
National Taiwan University\\%
Taipei 10764, Taiwan}
\email{tan@math.ntu.edu.tw}
\thanks{\textbf{Acknowledgement:} This research was supported in part by the National
Science Council of Taiwan, NSC95-2115-M-002-017-MY2.}
\begin{abstract}
We prove control theorems for abelian varieties over function fields.
\end{abstract}

\maketitle
\begin{section}{Introduction}
Control theorems for abelian varieties over $\Z^d$-extensions of global (or local) fields aim at
giving estimates for the sizes of their Galois (or flat) cohomology groups.
They play a crucial role in the arithmetic of abelian varieties and have important application
to Iwasawa theory. Control theorems are well known over number fields. In this paper
we prove control theorems for abelian varieties over function fields.

Our first result is the following. Let $p$ denote a given prime number and let $q$ be a power of $p$. We denote by $\F_q$ the finite field of order $q$.
\begin{theorem}{\em{(The Local Control Theorem)}}\label{t:loccont} Let $A$ be
an abelian variety over the local field
$K=\F_q((T))$. Assume that the reduction $\bar A$ of $A$ modulo $(T)$ is an
ordinary abelian variety. Write ${\bar A}(\F_q)_p$ for the $p$-Sylow subgroup of
${\bar A}(\F_q)$. Then for any $\Z_p^d$-extension $L/K$, we have the following estimate on
the size of the Galois cohomology group:
$$|\coh^1(L/K, A(L))|\leq |{\bar A}(\F_q)_p|^{d+1}.$$
\end{theorem}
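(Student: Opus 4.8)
The plan is to filter $A(L)$ by its formal group and reduce the estimate to the Iwasawa theory of the connected--\'etale pieces of $A[p^\infty]$. First, since $\Gamma:=\Gal(L/K)\cong\Z_p^{\,d}$ is a pro-$p$ group, every higher group $\coh^i(L/K,A(L))$ is automatically $p$-primary, so nothing is lost by working $p$-adically throughout. Because $A$ has good reduction, its N\'eron model over $\F_q[[T]]$ is an abelian scheme, and reduction gives a $\Gamma$-equivariant exact sequence
$$0\lr \widehat A(\mathfrak m_L)\lr A(L)\lr \bar A(k_L)\lr 0,$$
where $k_L$ is the residue field of $L$ and $\widehat A$ is the formal group. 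The Galois group of the maximal unramified subextension of $L/K$ is a quotient of $\Gal(\bar\F_q/\F_q)$ that embeds into $\Z_p$, so $k_L$ is either $\F_q$ (totally ramified case) or the unique $\Z_p$-extension $\F_\infty$ of $\F_q$; correspondingly the inertia subgroup $\Gamma_0\subseteq\Gamma$ is $\Z_p^{\,d}$ or $\Z_p^{\,d-1}$. Taking the long exact $\Gamma$-cohomology sequence reduces the theorem to estimating $\coh^i(\Gamma,\widehat A(\mathfrak m_L))$ and $\coh^i(\Gamma,\bar A(k_L))$ together with the connecting maps between them.

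Second, I would use the ordinarity hypothesis decisively. For an ordinary abelian variety the connected part $\widehat A$ of $A[p^\infty]$ is of multiplicative type of height $g=\dim A$: over $K^{\mathrm{ur}}$ it becomes isomorphic to $\widehat{\mathbb G}_m^{\,g}$, and it is Cartier dual to the \'etale $p$-divisible group $(A')^{\mathrm{et}}$ of the dual variety. This identifies $\widehat A(\mathfrak m_L)$ with an unramified twist of $(1+\mathfrak m_L)^{\oplus g}$, so the formal-group cohomology becomes the cohomology of the local principal units along the tower $L/K$. Simultaneously, the $p$-part of the reduction is the \'etale quotient: $\bar A(\F_q)_p=A^{\mathrm{et}}(\F_q)=\ker(\Frob_q-1\mid (\Q_p/\Z_p)^g)$ has order $|\det(1-\Frob_q\mid T_pA^{\mathrm{et}})|_p^{-1}$.

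The hard part --- and the heart of the argument --- is that one cannot bound the two terms separately. When $L/K$ contains the unramified $\Z_p$-direction and $d\ge 2$, both $\coh^1(\Gamma_0,\widehat A(\mathfrak m_L))$ and $\coh^1(\Gamma_0,\bar A(\F_\infty))$ are typically infinite, and only their combination inside $A(L)$ is finite. The mechanism forcing the cancellation is precisely the connected--\'etale (Cartier) duality inside $A[p^\infty]$: it matches the unit-root Frobenius eigenvalues governing the \'etale part with those governing $\widehat A$, so that the relevant connecting homomorphism is an isomorphism on the infinite parts, leaving a finite kernel and cokernel measured by $\det(1-\Frob_q)$, i.e. by $|\bar A(\F_q)_p|$. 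Making this precise --- establishing finiteness and computing the norm index of local units in the tower --- is the genuine obstacle; I expect to handle it via local Tate duality carried up the tower together with the explicit structure of $1+\mathfrak m_L$ as an Iwasawa module.

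Finally, to pass from $\Z_p$ to $\Z_p^{\,d}$ I would induct on $d$ using Hochschild--Serre for a chosen surjection $\Gamma\twoheadrightarrow\Z_p$ with kernel $\Z_p^{\,d-1}$, reducing everything to procyclic computations, where $\coh^1(\Z_p,-)$ is the group of coinvariants and $\mathrm{cd}_p(\Z_p)=1$ kills the error terms. Each of the $d$ cyclic directions contributes at most one factor of $|\bar A(\F_q)_p|$ --- from $\Hom(\Z_p,\bar A(\F_q)_p)$ in the totally ramified reduction term, or from the unit norm index in a ramified direction --- and the formal group together with the connecting map contributes one further factor, yielding the bound $|\bar A(\F_q)_p|^{\,d+1}$. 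The combinatorial bookkeeping of the $\binom{d}{i}$ Koszul pieces of $\coh^\ast(\Z_p^{\,d},-)$ is routine once the procyclic estimate and the cancellation above are in hand.
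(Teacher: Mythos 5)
You have reproduced the Mazur--Greenberg strategy (filter $A(L)$ by the formal group, bound the two pieces, induct on $d$ via Hochschild--Serre), but the step you yourself call ``the genuine obstacle'' --- finiteness of the combined contribution of $\widehat A(\mathfrak{m}_L)$ and $\bar A(k_L)$, i.e.\ the cancellation between the formal-group cohomology and the reduction cohomology --- is exactly the content of the theorem, and your proposal does not prove it; it only says you ``expect to handle it'' via local Tate duality and the structure of $1+\mathfrak{m}_L$ as an Iwasawa module. In characteristic $p$ that expectation runs into precisely the pathology the paper is designed to circumvent: $\widehat A(\mathfrak{m}_K)$, like $1+T\F_q[[T]]$, is an infinite product of copies of $\Z_p$, so it is not a finitely generated $\Z_p$-module, and the inverse limit of local units up the tower is not finitely generated over the Iwasawa algebra either; the char-$0$ norm-index and Coleman/Coates--Greenberg theory of formal groups in towers, which is what makes this strategy work over number fields, has no analogue here. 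Your localization of the difficulty is also off: already for $d=1$ and $L/K$ totally ramified, the sequence $0\to U_{L_n}\to L_n^*\to\Z\to 0$ together with Hilbert 90 gives $\coh^1(\Gal(L_n/K),1+\mathfrak{m}_{L_n})\simeq\Z/p^n\Z$ with injective inflation maps, so $\coh^1(\Gal(L/K),1+\mathfrak{m}_L)\simeq\Q_p/\Z_p$ is infinite; thus for the untwisted multiplicative formal group the ``formal'' term already blows up in the simplest case, and any finiteness must come from the unramified twist --- a fact your sketch nowhere establishes.

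The paper's proof avoids formal-group cohomology altogether, and this is where it genuinely departs from your plan. Ordinarity gives $[p]=\V\circ\tF$ with $\V$ separable, whence every point of $A^1(L')$ becomes $p^m$-divisible after the purely inseparable extension $L'^{(1/p^m)}$, i.e.\ $A^1(L')=p^mA^1(L'^{(1/p^m)})$. Passing to the perfection $K^{(1/p^{\infty})}$, this yields $A(K^{(1/p^{\infty})})\otimes_{\Z}\Q_p/\Z_p=0$, so the Kummer sequence identifies $\coh^1(G_K,\bar A[p^{\infty}])$ with the $p$-primary part of $\coh^1(G_K,A(\bar K^{(1/p^{\infty})}))$. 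Local Tate duality, applied not along the tower $L/K$ but to the Frobenius isogeny $\tF$ and its dual on the dual abelian variety $B$, shows that the kernel of $\coh^1(K,A)\to\coh^1(G_K,A(\bar K^{(1/p^{\infty})}))$ lies in the Pontryagin dual of $\bar B(\F_q)$; its $p$-part has order $|\bar A(\F_q)_p|$, giving the exponent $+1$. The image of $\coh^1(L'/K,A(L'))$ then lands in $\coh^1(L'/K,\bar A(\F_{L'})_p)$, the cohomology of a \emph{finite} module on which inertia acts trivially, which an elementary group-cohomology lemma bounds by $|\bar A(\F_q)_p|^{d}$. If you wish to rescue your route, what you must supply is a characteristic-$p$ substitute for the Coates--Greenberg theory of formal groups in $\Z_p^d$-towers; the paper's inseparable-tower argument is exactly such a substitute, and without it (or something equivalent) your proposal remains a plan rather than a proof.
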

Similar result over local fields of characteristic zero has been proved by Mazur (\cite{maz})
and Greenberg (\cite{gre}). These proofs depend on deep theorems on local points of abelian
varieties. However, some of these theorems may not hold in characteristic $p$.
For example, one remarkable phenomenon is that the $p$-completion of the rational
points of an abelian variety over a local field of characteristic $p$ is not a finitely
generated $\Z_p$-module. Our proof (in section \ref{s:insp}) introduces a new technique involving
a huge purely inseparable extension and hence works well in the characteristic $p$ situation.

Given an abelian variety $A$ over a field $K$ of characteristic $p$, we regard $A$ as a sheaf for the flat topology on $K$ and
denote the kernel of the multiplication by $p^m$ on $A$ by $\mathcal{A}[p^m]$. % for each positive integer $m$.
If $K$ is a global field, the $p^m$-Selmer group
$\Sel_{p^m}(F)$ for a finite extension field $F$ of $K$ is defined to be the kernel of the composition
$$\coh^1(F,\mathcal{A}[p^m])\longrightarrow \coh^1(F,A)\stackrel{loc}{\longrightarrow}
\bigoplus_v \coh^1(F_v,A),$$
where $loc$ is the localization map and in the direct sum, $v$ runs through all places of $F$.
The direct limit of $\Sel_{p^m}(F)$ as $m\rightarrow\infty$ is denoted by $\Sel_{p^{\infty}}(F)$.
For any Galois extension $L/K$, the $p$-primary part of the Selmer group of $A$ over $L$ is taken to be the direct limit of
$\Sel_{p^{\infty}}(F)$ over all finite intermediate fields $F$ of $L/K$.
We write $\Gamma_F$ for the Galois group of $L/F$ and let
$$res_{L/F}:\Sel_{p^{\infty}}(F)\longrightarrow \Sel_{p^{\infty}}(L)^{\Gamma_F}$$
be the restriction map. We deduce from Theorem \ref{t:loccont} the following control theorem.
\begin{theorem}\label{t:cont} {\em {(The Control Theorem)}} Let $L$ be a $\Z_p^d$-extension of a global field $K$ of characteristic $p$ with Galois
group $\Gal(L/K)=\Gamma$. Assume that $L/K$ is unramified outside a finite set $S$ of places of $K$. Let $A$ be an abelian variety over $K$ with good ordinary reduction
at every place in $S$. Then for every finite intermediate extension $F$ of $L/K$, the kernel and the cokernel of the restriction map
$res_{L/F}$ on the $p$-primary Selmer groups $\Sel_{p^{\infty}}(F)$ are finite. Furthermore, if $d=1$, then the orders of the kernel and the cokernel of $res_{L/F}$
are bounded as $F$ varies.
\end{theorem}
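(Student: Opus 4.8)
The plan is to run the standard control-theorem diagram chase, with Theorem~\ref{t:loccont} supplying the essential local input. First I would form the fundamental commutative diagram whose rows are the defining sequences of the Selmer groups. Taking $\coh^1(F,\mathcal A[p^\infty])$ (flat cohomology) as ambient group, the top row is
$$0 \to \Sel_{p^\infty}(F) \to \coh^1(F,\mathcal A[p^\infty]) \stackrel{\lambda_F}{\longrightarrow} \bigoplus_v \coh^1(F_v, A)(p),$$
and the bottom row is the analogous sequence over $L$ after passing to $\Gamma_F$-invariants. The three vertical maps are $res_{L/F}$, the global restriction $h\colon \coh^1(F,\mathcal A[p^\infty]) \to \coh^1(L,\mathcal A[p^\infty])^{\Gamma_F}$, and the product $g=\prod_v g_v$ of local restriction maps. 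After restricting the right-hand terms to the images of the localization maps so that the rows become short exact, the snake lemma shows that $\ker(res_{L/F})$ injects into $\ker(h)$ and that $\coker(res_{L/F})$ is an extension built from subquotients of $\ker(g)$ and $\coker(h)$. It therefore suffices to control these three groups.

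For the global maps, inflation--restriction gives $\ker(h)=\coh^1(\Gamma_F,\mathcal A[p^\infty](L))$ and an injection $\coker(h)\hookrightarrow \coh^2(\Gamma_F,\mathcal A[p^\infty](L))$. Since $F$ is a global field, the $p$-primary torsion $\mathcal A[p^\infty](F)=\mathcal A[p^\infty](L)^{\Gamma_F}$ is finite; as $\Gamma_F\cong\Z_p^d$ and $\mathcal A[p^\infty](L)$ is a cofinitely generated $\Z_p$-module, finiteness of the invariants forces all the groups $\coh^i(\Gamma_F,\mathcal A[p^\infty](L))$ to be finite by the standard properties of the cohomology of $\Z_p^d$. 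This already yields finiteness of $\ker(h)$ and $\coker(h)$.

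For the local maps, $\ker(g_v)$ is governed by $\coh^1(\Gal(L_w/F_v),A(L_w))(p)$ for $w\mid v$. At the infinitely many places $v\notin S$ where $A$ has good reduction the extension $L_w/F_v$ is unramified, and a Lang-type vanishing (the $\coh^1$ of the unramified procyclic group with values in the points of a variety with good reduction is trivial) gives $\ker(g_v)=0$; thus only the finitely many places of $S$ together with the finitely many bad-reduction places can contribute. At each $v\in S$ the completion $F_v$ has the form $\F_{q_v}((T_v))$ and $A$ has good ordinary reduction, so Theorem~\ref{t:loccont} applies to the $\Z_p^{d_v}$-extension $L_w/F_v$ (with $d_v\le d$) and bounds the contribution by $|\bar A(\F_{q_v})_p|^{d_v+1}$, while the remaining bad-reduction unramified places contribute finite groups that one checks directly via the N\'eron-model and component-group description. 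Hence $\ker(g)$ is finite, and finiteness of the kernel and cokernel of $res_{L/F}$ follows.

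Finally, for $d=1$ I would extract uniform bounds. Each local term is then bounded by $|\bar A(\F_{q_v})_p|^{2}$ independently of $F$ (as $d_v\le 1$), and the bad-reduction terms are likewise uniformly bounded, so $\ker(g)$, and with it $\coker(res_{L/F})$, stay bounded. The hard part will be the uniform bound on $\ker(res_{L/F})$: although $\ker(res_{L/F})\hookrightarrow \coh^1(\Gamma_F,\mathcal A[p^\infty](L))$, the latter need not be bounded when $\mathcal A[p^\infty](L)$ is infinite and $\Gamma$ acts through a character of infinite order. I expect this to be the crux, to be resolved by observing that the classes in $\coh^1(\Gamma_F,\mathcal A[p^\infty](L))$ arising from $\ker(res_{L/F})$ must additionally satisfy the Selmer local conditions, and that these conditions --- again controlled by Theorem~\ref{t:loccont} --- cut the kernel down to a group of bounded order. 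The absence of a nontrivial cyclotomic character in characteristic~$p$ should help constrain the $\Gamma$-action and keep these bounds uniform.
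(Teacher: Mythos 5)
Your skeleton for the finiteness assertion --- the fundamental diagram, inflation--restriction for the global terms, Theorem \ref{t:loccont} at the places of $S$, Lang-type vanishing and Milne's I.3.8 at the unramified places, and vanishing at completely split places --- matches the paper's proof in outline, but your handling of the global terms is a genuinely different route. The paper proves \emph{explicit} bounds: ordinariness makes $A(L)_{tor,p}$ everywhere unramified (Corollary \ref{c:list}(b)), so the field it cuts out is an everywhere unramified abelian pro-$p$ extension of $K$, which global class field theory forces to be pro-cyclic up to a finite extension (Lemma \ref{l:globordin}); this feeds the group-cohomological bounds of Lemma \ref{l:incor} and Corollary \ref{c:glob}. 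You instead invoke the soft Iwasawa-theoretic fact that for $\Gamma_F\cong\Z_p^d$ acting on a cofinitely generated discrete $p$-primary module, finiteness of the invariants (here $A(F)[p^\infty]$, finite by Lang--N\'eron) forces all $\coh^i(\Gamma_F,\mathcal{A}[p^\infty](L))$ to be finite. That fact is true (dualize and run the Koszul complex over $\Z_p[[T_1,\dots,T_d]]$: finiteness of the coinvariants of the dual module means the trivial character does not occur among the generalized eigenvalues, which kills all the homology rationally), and it is notable that your argument uses no ordinariness at all on the global side --- ordinariness enters only through Theorem \ref{t:loccont}. The trade-off is that your route yields no effective bounds, which is precisely what the paper's heavier machinery is designed to produce.

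The genuine gap is the $d=1$ boundedness statement, which you explicitly leave unresolved. Two points. First, your claim that each local term is bounded by $|\bar A(\F_{q_v})_p|^{2}$ ``independently of $F$'' is not right as stated: $\F_{q_v}$ is the residue field of $F_v$ and grows with $F$ whenever the local extension has infinite residue extension. One must split cases: if $L_w/K_{v_0}$ is unramified the term vanishes outright by the Lang-type vanishing for good reduction, and otherwise inertia is open in the decomposition group $\cong\Z_p$, so the residue extension is finite and the bound stabilizes. (The paper's own discussion of the boundedness of $\mathbf{B}_F$ is terse on the same point.) Second, and more seriously, your proposed resolution of the ``crux'' --- cutting $\coh^1(\Gamma_F,\mathcal{A}[p^\infty](L))$ down by imposing Selmer local conditions via Theorem \ref{t:loccont} --- is not the mechanism that works and is not what the paper does: elements of $\ker(res_{L/F})$ are already Selmer classes, and nothing in the local theory forces that intersection to be uniformly small. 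The paper's actual mechanism is global: by Lemma \ref{l:globordin} the $\Gamma$-action on $A(L)_{tor,p}$ factors through an essentially pro-cyclic quotient, and then the Herbrand-quotient/norm argument with the $p$-divisible part $A(L)_{tor,\infty}$ in Corollary \ref{c:glob} bounds $\coh^1$ and $\coh^2$ at every layer by the order of the finite group $T=A(L)_{tor,p}/A(L)_{tor,\infty}$, independently of the layer. Alternatively, you could have finished inside your own framework: since $A(F')[p^\infty]$ is finite at \emph{every} finite layer $F'$, no eigenvalue of a topological generator $\gamma$ of $\Gamma$ on the dual $X$ of $\mathcal{A}[p^\infty](L)$ is a $p$-power root of unity, so $\coh^1(\Gamma_F,\mathcal{A}[p^\infty](L))$, being dual to $X[\gamma_F-1]$ for $\gamma_F$ a topological generator of $\Gamma_F$, lies in the fixed finite $\Z_p$-torsion submodule of $X$, while $\coh^2(\Gamma_F,\cdot)=0$ because $\mathrm{cd}_p(\Z_p)=1$. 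As written, however, your proposal establishes the finiteness statement but not the $d=1$ boundedness.
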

The number field counterpart of this theorem appears in
Mazur (\cite{maz}) and Greenberg (\cite{gre}).

To show that our control theorem is very useful we give an application to Iwasawa theory. Denote by $\Lambda_{\Gamma}$ the Iwaswa algebra $\Z_p[[\Gamma]]$
and denote the Pontryagin dual $\Hom(\Sel_{p^{\infty}}(L),\Q_p/\Z_p))$ by $X_L$.
\begin{theorem}\label{t:iwasawa}
If the condition of {\em {Theorem \ref{t:cont}}} holds, then $X_L$ is a finitely generated module over $\Lambda_{\Gamma}$.
\end{theorem}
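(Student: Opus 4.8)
The plan is to apply the topological version of Nakayama's lemma for the Iwasawa algebra. Since $\Gamma\cong\Z_p^d$, a choice of topological generators $\gamma_1,\dots,\gamma_d$ identifies $\Lambda_{\Gamma}=\Z_p[[\Gamma]]$ with the formal power series ring $\Z_p[[T_1,\dots,T_d]]$ via $T_i=\gamma_i-1$; this is a complete Noetherian local ring with maximal ideal $\mathfrak{m}=(p,T_1,\dots,T_d)$ and residue field $\F_p$. The Pontryagin dual $X_L=\Hom(\Sel_{p^{\infty}}(L),\Q_p/\Z_p)$ of the discrete $\Gamma$-module $\Sel_{p^{\infty}}(L)$ is a compact $\Lambda_{\Gamma}$-module, and the topological Nakayama lemma asserts that such a module is finitely generated over $\Lambda_{\Gamma}$ precisely when the $\F_p$-vector space $X_L/\mathfrak{m}X_L$ is finite-dimensional. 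Thus the whole theorem reduces to proving that $X_L/\mathfrak{m}X_L$ is finite.

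First I would translate this finiteness back to the Selmer side by Pontryagin duality. Since $X_L/\mathfrak{m}X_L$ is the maximal quotient of $X_L$ annihilated by $\mathfrak{m}$, its dual is the maximal $\mathfrak{m}$-torsion submodule of $\Sel_{p^{\infty}}(L)$, namely $\Sel_{p^{\infty}}(L)[\mathfrak{m}]=\big(\Sel_{p^{\infty}}(L)^{\Gamma}\big)[p]$, the $p$-torsion of the $\Gamma$-invariants (using that $\mathfrak{m}$ is generated by $p$ together with the augmentation ideal, and that $\mathfrak{m}$ is stable under the standard involution $\gamma\mapsto\gamma^{-1}$ of $\Lambda_{\Gamma}$). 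Hence it suffices to show that $\Sel_{p^{\infty}}(L)^{\Gamma}[p]$ is finite.

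Here is where Theorem \ref{t:cont} enters. Taking $F=K$, so that $\Gamma_K=\Gamma$, the Control Theorem says that the restriction map $\res_{L/K}\colon\Sel_{p^{\infty}}(K)\to\Sel_{p^{\infty}}(L)^{\Gamma}$ has finite kernel and finite cokernel. Consequently $\Sel_{p^{\infty}}(L)^{\Gamma}$ is, up to finite groups, a quotient of $\Sel_{p^{\infty}}(K)$; in particular it is a cofinitely generated $\Z_p$-module as soon as $\Sel_{p^{\infty}}(K)$ is. The cofinite generation of $\Sel_{p^{\infty}}(K)$ over $\Z_p$ is the remaining input: it follows from the finiteness of the $p^m$-Selmer groups $\Sel_{p^m}(K)$ over the global field $K$ (each $\Sel_{p^m}(K)$ sits in an exact sequence flanked by finite cohomology groups), which bounds $\Sel_{p^{\infty}}(K)[p]$ and forces the $\Z_p$-corank to be finite. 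A cofinitely generated $\Z_p$-module has finite $p$-torsion, so $\Sel_{p^{\infty}}(L)^{\Gamma}[p]$ is finite, completing the argument.

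The main obstacle I anticipate is not the Nakayama machinery, which is routine, but ensuring the duality bookkeeping in the second step is exact and checking that the control-theoretic comparison genuinely transfers cofinite generation from $K$ to the $\Gamma$-invariants over $L$. One must verify that ``finite kernel and cokernel'' preserves the $\Z_p$-corank (it does, since commensurable cofinitely generated modules have equal corank), and that the involution $\gamma\mapsto\gamma^{-1}$ does not disturb the identification of the $\mathfrak{m}$-torsion; both points are standard but worth recording explicitly.
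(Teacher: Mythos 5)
Your proof is correct and follows essentially the same route as the paper: the paper likewise deduces from Theorem \ref{t:cont} (taking $F=K$, together with the standard cofinite generation of $\Sel_{p^{\infty}}(K)$ over $\Z_p$) that $\Sel_{p^{\infty}}(L)^{\Gamma}[p]$ is finite, and then applies the topological Nakayama lemma (citing \cite{w82}, p.~279) to the compact $\Lambda_{\Gamma}$-module $X_L$. The only difference is expository: you spell out the duality bookkeeping and the $\Z_p$-corank transfer that the paper dismisses as obvious.
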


This theorem is fundamental for advances in Iwasawa theory. Such results
in the function field case only appear recently.
Lacking our local control theorem, these results have to depend on extra assumptions.
For example, in Ochiai and Trihan (\cite{otr06, otr08}), they assume that $L/K$ is the constant $\Z_p$-extension unramified at every place of $K$, while
Bandini and Longhi (\cite{bl06}) treat the case of elliptic curve with split multiplicative reduction at every place of $S$.
On the other hand, our control theorem reduces the proof of Theorem \ref{t:iwasawa} to a routine task, because it obviously implies that
the $p$-torsion subgroup $\Sel_{p^{\infty}}(L)^{\Gamma}[p]$ of
$\Sel_{p^{\infty}}(L)^{\Gamma}$ is a finite group and hence the Nakayama Lemma (see \cite{w82}, p.279) can be applied to the compact $\Lambda_{\Gamma}$-module $X_L$.
Here is another
application of our main results, which strengthen %Theorem 1.7 of \cite{otr06} and
Theorem 1.1 of \cite{bl06}.
%\begin{theorem}\label{t:otr} Assume that the condition of {\em{Theorem \ref{t:iwasawa}}} is satisfied.
%If $X_L$ is a non-torsion $\Lambda_{\Gamma}$-module and $L'/K$ is an intermediate extension of $L/K$
%with Galois group $\Gamma'=\Gal(L'/K)\simeq \Z_p^e$, then $X_{L'}$ is also a non-torsion $\Lambda_{\Gamma'}$-module.
%In particular, if $L$ contains the constant $\Z_p$-extension of $K$, then $X_L$ must a torsion $\Lambda_L$-module.
%\end{theorem}
%\begin{proof}
%The second statement is a consequence of the first statement and Theorem 1.7 of \cite{otr06}.
%Suppose $X_L$ contains a non-torsion element. By using a pseudo-isomorphism of the form
%$i:X_L\longrightarrow \bigoplus_k \Lambda_{\Gamma}/(f_k)$,
%we deduce that some $f_k=0$ and hence there is a surjection
%$\pi:X_L\longrightarrow \Lambda_{\Gamma}$. This implies that for each finite intermediate extension $F/K$, the Pontryagin
%dual of $\Sel_{p^{\infty}}(L)^{\Gamma_F}$ contains a free module of the group ring $\Z_p[\Gal(F/K)]$.
%By the control theorem, we know that $\Sel_{p^{\infty}}(F)$ also contains a free $\Z_p[\Gal(F/K)]$-module.
%This is enough for deducing that $X_{L'}$ is non-torsion.
%\end{proof}
\begin{theorem}\label{t:bl06}
Suppose that $A/K$ is an elliptic curve and at every place $v\in S$, $A$ has either good ordinary or split multiplicative reduction.
Then $X_L$ is a finitely generated $\Lambda_{\Gamma}$-module.
\end{theorem}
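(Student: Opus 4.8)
The plan is to bootstrap from the control estimate on $p$-torsion used in Theorem~\ref{t:iwasawa}, the only genuinely new ingredient being a local analysis at the split multiplicative places via the Tate uniformization. First I would reduce, by the topological Nakayama Lemma applied to the compact $\Lambda_{\Gamma}$-module $X_L$, to showing that $\Sel_{p^{\infty}}(L)^{\Gamma}[p]$ is finite; this group is $\Sel_{p^{\infty}}(L)[\mathfrak m]$ for the maximal ideal $\mathfrak m=(p,I_{\Gamma})$ of $\Lambda_{\Gamma}$, and is Pontryagin dual to $X_L/\mathfrak m X_L$. I would then form the usual control diagram comparing $\Sel_{p^{\infty}}(K)$ with $\Sel_{p^{\infty}}(L)^{\Gamma}$ through the restriction map $s=res_{L/K}$, with middle vertical map $r$ on $\coh^1(\cdot,\mathcal A[p^{\infty}])$ and right vertical map $g=(g_v)$ on the local conditions. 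Writing $\Gamma_v\subseteq\Gamma$ for the decomposition group and $L_w/K_v$ for the induced local extension, Hochschild--Serre identifies $\ker g_v$ with $\coh^1(\Gamma_v,A(L_w))[p^{\infty}]$ and embeds $\coker r$ into $\coh^2(\Gamma,\mathcal A[p^{\infty}](L))$. Since $\Sel_p(K)$ is finite and $\mathcal A[p^{\infty}](L)$ is a cofinitely generated $\Z_p$-module (only the \'etale, ordinary part of $\mathcal A[p^{\infty}]$ contributes $L$-points in characteristic $p$), both $\image s$ and $\coker r$ are cofinitely generated over $\Z_p$, hence have finite $p$-torsion. By the snake lemma $\coker s$ is an extension of a subgroup of $\coker r$ by a quotient of a subgroup of $\bigoplus_{v\in S}\ker g_v$; since cofinitely generated $\Z_p$-modules have finite $p$-torsion and this class is stable under subquotients and extensions, it suffices to prove that each $\ker g_v$ is cofinitely generated over $\Z_p$.

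At the places $v\in S$ of good ordinary reduction this is immediate: the Local Control Theorem~\ref{t:loccont}, applied to $K_v$ and the $\Z_p^{d_v}$-extension $L_w/K_v$, shows that $\coh^1(\Gamma_v,A(L_w))=\coh^1(L_w/K_v,A(L_w))$ is in fact finite. At the places of good reduction outside $S$, where $L/K$ is unramified, one has $\ker g_v=0$ exactly as in Theorem~\ref{t:cont}. The remaining, and genuinely new, case is a split multiplicative place $v\in S$, which I would handle through the Tate uniformization. The split hypothesis makes the character lattice $q^{\Z}\cong\Z$ a \emph{trivial} Galois module, so the exact sequence $0\to q^{\Z}\to\mathbb{G}_m\to E\to 0$ yields, on $L_w$-points,
$$0\longrightarrow \Z\longrightarrow L_w^{\ast}\longrightarrow E(L_w)\longrightarrow 0,$$
using $\coh^1(G_{L_w},\Z)=\Hom(G_{L_w},\Z)=0$. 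Taking $\Gamma_v$-cohomology, the vanishing $\coh^1(\Gamma_v,\Z)=\Hom(\Gamma_v,\Z)=0$ together with Hilbert's Theorem~90, $\coh^1(\Gamma_v,L_w^{\ast})=0$, produces an injection
$$\coh^1(\Gamma_v,E(L_w))\hookrightarrow \coh^2(\Gamma_v,\Z)\cong\Hom(\Gamma_v,\Q_p/\Z_p)\cong(\Q_p/\Z_p)^{d_v}.$$
Thus $\ker g_v$ is cofinitely generated over $\Z_p$ of corank at most $d_v$, which is all that is required. Feeding this back, $\coker s$ is cofinitely generated over $\Z_p$, so $\Sel_{p^{\infty}}(L)^{\Gamma}[p]$ is finite, and Nakayama gives the theorem.

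The main obstacle is precisely this split multiplicative local term, and the point to appreciate is why only finite generation—rather than the full control theorem—can be expected. In contrast to the good ordinary case, $\coh^1(\Gamma_v,E(L_w))$ typically has strictly positive $\Z_p$-corank, so $\coker s$ need not be finite and the kernel and cokernel of $res_{L/F}$ are no longer bounded; what survives is that this local group remains cofinitely generated, hence has finite $p$-torsion, which is exactly the input the Nakayama argument consumes. The subsidiary point demanding care is the characteristic-$p$ verification that $\mathcal A[p^{\infty}](L)$ is cofinitely generated, so that the global error term $\coker r$ is controlled; here one uses that the connected part of $\mathcal A[p^{\infty}]$ contributes no $L$-points and only the \'etale ordinary quotient of corank $\dim A=1$ survives.
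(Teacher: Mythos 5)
Your proposal follows the paper's own strategy in all essentials: Nakayama's lemma applied to the compact $\Lambda_{\Gamma}$-module $X_L$, reduction to finiteness of $\Sel_{p^{\infty}}(L)^{\Gamma}[p]$ via the control diagram, the Local Control Theorem at the good ordinary places of $S$, and---the genuinely new step, identical to the paper's---the Tate uniformization at split multiplicative places, where Hilbert's Theorem 90 embeds the local kernel into $\coh^2(\cdot,Q^{\Z})\simeq\coh^1(\cdot,\Q/\Z)$, whose $p$-torsion is small. The only real difference is bookkeeping: you work at the infinite level with $\Gamma$-cohomology and coranks of cofinitely generated $\Z_p$-modules, while the paper works at finite layers $L'/K$ and bounds the $p$-torsion of $\coker(res_{L'/K})$ uniformly in $L'$; both versions deliver the finiteness that Nakayama consumes.

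Two points need repair. First, you never treat the places $v\notin S$ at which $A$ has bad reduction: $S$ is only the ramification set of $L/K$, so nothing prevents $A$ from having bad (e.g.\ additive) reduction outside $S$, and your claim that $\ker g_v=0$ ``exactly as in Theorem \ref{t:cont}'' is valid only at good-reduction places, where unramifiedness plus Lang's theorem (Proposition I.3.8 of \cite{mil86}) gives vanishing. At the finitely many bad places outside $S$ the same proposition of Milne bounds the unramified $\coh^1$ by the order $m_v$ of the component group of the N\'eron fibre; this is finite, which is all you need, but the case must be addressed, and it is addressed in the paper's proof of Theorem \ref{t:cont} that you are importing. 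Second, your assertion that $\coker r\subseteq\coh^2(\Gamma,\mathcal{A}[p^{\infty}](L))$ is cofinitely generated over $\Z_p$ is presented as automatic from cofinite generation of $\mathcal{A}[p^{\infty}](L)$, but this is precisely the content the paper labors to establish in Lemma \ref{l:incor} and Corollary \ref{c:glob}, whose hypotheses are secured by Lemma \ref{l:globordin}---and note that case (2) of that lemma (the computation $K_vL_0=K_v$ via the Tate period) is a second, independent place where the split-multiplicative hypothesis is used. To close this step you should either invoke Corollary \ref{c:glob}, which does cover the elliptic-curve, good-ordinary-or-split-multiplicative setting, or supply a proof of the general fact that $\coh^i(\Z_p^d,M)$ is cofinitely generated over $\Z_p$ for every cofinitely generated discrete $p$-primary $\Gamma$-module $M$; as written, this inference is a gap, though a fixable one.
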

In Section \ref{subsec:cont} we prove our main result Theorem
\ref{t:cont} as well as Theorem \ref{t:bl06}, assuming Theorem \ref{t:loccont}
whose proof we postpone to Section \ref{subsec:t1}.

Finally, we set some notations. From now on, every field will be of characteristic $p>0$.
For an abelian variety $A$ defined over a field $K$,
denote by $A[p^m]$ the $p^m$-torsion points on $A$ and write
$A[p^{\infty}]=\bigcup_m A[p^m]$. Denote $A(K)_{tor,p}=A[p^{\infty}]\cap A(K)$, the
$p$-primary part of $A(K)_{tor}$.

Suppose that $K$ is a local field. We use $\F_K$ to denote its residue field which can also be viewed as its constant field.
Let $A^1(K)$ denote the pro-$p$ subgroup of $A(K)$ consisting of points with trivial reduction, and let $A(K)_p$ denote the maximal pro-$p$ subgroup
of $A(K)$.

For a global or local field $K$ and for each $n$,
we use $K^{(1/p^n)}/K$ to denote
the unique purely inseparable extension of degree $p^n$.
Also, we use $\bar K$ to
denote the separable closure of $K$ and write $G_K=\Gal({\bar K}/K)$.
We write
${\bar K}^{(1/p^n)}$ for ${\overline{K^{(1/p^n)}}}$.
Thus, the algebraic closure
of $K$ equals ${\bar K}^{(1/p^{\infty})}:=\bigcup_{n=1}^{\infty} {\bar K}^{(1/p^n)}$.
The Frobenius substitution
$$\Frob_{p^n}:K^{(1/p^n)}\longrightarrow K, \;\; x\mapsto x^{p^n},$$
is an isomorphism. And we use it to identify $G_{K^{(1/p^n)}}$, for $n=1,...,\infty$,
with $G_K$.

The author would like to thank A. Bandini, W.-C. Chi, C. D. Gonz$\acute{\text{a}}$lez-Avil$\acute{\text{e}}$s, K.F. Lai,
D. Rockmore and F. Trihan for many valuable suggestions.
\end{section}

\begin{section}{The $p^m$-torsion points}\label{s:sel}
In this section, we prove that Theorem \ref{t:loccont}
implies Theorem \ref{t:cont} and Theorem \ref{t:bl06}.
%The proof is given in Section \ref{subsec:cont}.

%Some local field features are discussed in Section \ref{subsec:ordinary}
%and Section \ref{subsec:some}.

\begin{subsection}{Ordinary abelian varieties}\label{subsec:ordinary}
Assume that $K$ is a field of characteristic $p$ and $A/K$ is an
abelian variety of dimension $g$. Then $A/K$ is ordinary
if and only if (over the algebraic closure of $K$)
the group scheme ${\mathcal{A}}[p]$
can be decomposed as (cf. \cite{mum74}, Sec. 14):
\begin{equation}\label{e:decom}
{\mathcal{A}}[p]=(\Z/p\Z)^g\times (\mu_p)^g.
\end{equation}
This condition is equivalent to
\begin{equation}\label{e:ppt}
A[p^m]\simeq (\Z/p^m\Z)^g.
\end{equation}
In this case, the multiplication by $p$ on $A$ is decomposed as
\begin{equation}\label{e:p}
[p]=\V\circ \tF,
\end{equation}
where $\tF:A \longrightarrow A^{(p)}$
is the Frobenius isogeny and $\V:A^{(p)}\longrightarrow A$ is separable.
\begin{lemma}\label{l:lift}
Let $A$ be an abelian variety over a local field $K$ so that $\bar A$, the reduction of $A$, is an ordinary
abelian variety. Then $A$ is also ordinary and the reduction map induces an isomorphism of $G_K$-modules:
\begin{equation}\label{e:ord}
A[p^{\infty}]\stackrel{\sim}{\longrightarrow} {\bar A}[p^{\infty}]
\end{equation}
%In particular, the group $A[p^{\infty}]$ is unramified over $K$,
%in the sense that it is rational over the maximal unramified extension of $K$.
\end{lemma}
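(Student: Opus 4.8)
The plan is to prove both assertions by passing to the abelian scheme model $\mathcal{A}$ of $A$ over the valuation ring $\mathcal{O}_K=\F_q[[T]]$, whose special fibre is $\bar A$ (good reduction), and working with the $p$-divisible group $\mathcal{A}[p^\infty]$ over $\mathcal{O}_K$. Over the henselian local ring $\mathcal{O}_K$ this $p$-divisible group carries a connected-\'etale exact sequence
$$0\lr \mathcal{A}[p^\infty]^0\lr \mathcal{A}[p^\infty]\lr \mathcal{A}[p^\infty]^{\text{\'et}}\lr 0,$$
whose formation commutes with the base change to the residue field $\F_q$. Since $\bar A$ is ordinary, \eqref{e:decom} shows that the special fibre $\bar A[p^\infty]^{\text{\'et}}$ of the \'etale quotient has height $g$.

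First I would establish that $A$ itself is ordinary. The key point is that ordinarity is a statement about the \emph{group scheme} $\mathcal{A}[p]$, namely \eqref{e:decom}, equivalently that the \'etale quotient of $\mathcal{A}[p^\infty]$ has height $g$; it does not refer to rational points. Because $\mathcal{A}[p]^{\text{\'et}}$ is finite \'etale over $\mathcal{O}_K$ its rank is locally constant, and $\spec\mathcal{O}_K$ is connected, so the height of $\mathcal{A}[p^\infty]^{\text{\'et}}$ agrees ($=g$) on the special and on the generic fibre. The generic fibre of the \'etale quotient is the \'etale quotient of $A[p^\infty]$, so $A[p^\infty]^{\text{\'et}}$ has height $g$ and $A$ is ordinary; in particular \eqref{e:ppt} applies and gives $|A[p^m]|=|\bar A[p^m]|=p^{mg}$ for every $m$.

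Next I would analyse the reduction map on torsion. Using the valuative criterion of properness for $\mathcal{A}/\mathcal{O}_K$, every point of $A[p^m]\subset A(\bar K)$ extends to an $\mathcal{O}_{\bar K}$-point and reduces to a point of $\bar A(\overline{\F_q})$, yielding a homomorphism $A[p^m]\to\bar A[p^m]$ which is $G_K$-equivariant, the action on the target factoring through the canonical surjection $G_K\twoheadrightarrow G_{\F_q}$; since $A$ has good reduction the \'etale quotient is unramified, so the inertia subgroup acts trivially on $A[p^m]$. The kernel of this map consists of the $p^m$-torsion points with trivial reduction, i.e.\ the separable points of the connected group scheme $\mathcal{A}[p^m]^0$; a connected finite group scheme over a field has no nontrivial separable points (concretely $\mathcal{A}[p^\infty]^0\cong\mu_{p^\infty}^g$ over the strict henselisation, and $\mu_{p^m}(\bar K)=1$ in characteristic $p$), so the reduction map is injective.

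Finally, I would conclude by counting: the reduction map is an injective $G_K$-homomorphism between the finite groups $A[p^m]$ and $\bar A[p^m]$, which have the same order $p^{mg}$ by the second paragraph, hence it is an isomorphism of $G_K$-modules; passing to the direct limit over $m$ gives \eqref{e:ord}. I expect the main obstacle to be the transfer of ordinarity from $\bar A$ to $A$. The natural temptation is to argue with rational points, but the connected-\'etale sequence need not split on $K^{sep}$-points (the formal group has no separable $p$-power torsion while its geometric torsion is nontrivial), so it is essential to phrase ordinarity group-scheme-theoretically and exploit the local constancy of the height of the \'etale quotient over the connected base $\spec\mathcal{O}_K$; once $A$ is known to be ordinary, \eqref{e:ppt} supplies the orders and the remaining injective-equals-surjective count is formal.
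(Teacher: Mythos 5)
Your strategy (establish ordinarity of $A$ via the \'etale quotient of $\mathcal{A}[p^\infty]$, then prove injectivity of reduction on torsion and count) is genuinely different from the paper's, but as written it has a real gap, located exactly at the step you treat as automatic: the assertion that ``the generic fibre of the \'etale quotient is the \'etale quotient of $A[p^\infty]$,'' equivalently that $(\mathcal{A}[p^\infty]^0)_K$ is still connected. In equal characteristic $p$ this is \emph{false} for general finite flat group schemes over $\mathcal{O}_K=\F_q[[T]]$: the group scheme $G=\ker\bigl(F-T\colon \mathbb{G}_a\to\mathbb{G}_a\bigr)=\spec \mathcal{O}_K[x]/(x^p-Tx)$ is finite flat of order $p$ and connected (its special fibre is $\alpha_p$), yet its generic fibre is \'etale, with $p-1$ nontrivial separable points (the roots of $x^{p-1}=T$), all of which reduce to the identity. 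The same failure occurs for abelian schemes once the ordinarity hypothesis on the special fibre is dropped: for an elliptic curve over $\mathcal{O}_K$ with supersingular special fibre and ordinary generic fibre, $\mathcal{E}[p^m]$ is connected over $\mathcal{O}_K$, so its \'etale quotient over $\mathcal{O}_K$ is trivial, while the generic fibre has an \'etale quotient of height one. Thus local constancy of the rank of $\mathcal{A}[p^\infty]^{\mathrm{\acute et}}$ is true but does not yield your conclusion, and the identical issue undermines your injectivity step: the kernel of reduction consists of points of $(\mathcal{A}[p^m]^0)_K$, a group scheme which at that stage of your argument is only known to be connected \emph{over} $\mathcal{O}_K$, not over $K$ --- and the example above shows that is not enough (its points with trivial reduction are exactly its nontrivial separable points). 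A minor related slip: $A[p^m]\not\subset A(\bar K)$ when $\bar K$ is the separable closure (cf.\ Corollary \ref{c:list}(a)); the torsion lives in $A(\bar K^{(1/p^m)})$, so the valuative criterion must be run over finite extensions inside the algebraic closure.

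Both defects are repaired by the one claim you state only parenthetically and never prove, which is in fact the crux: $\mathcal{A}[p^\infty]^0\cong\mu_{p^\infty}^g$ over the strict henselisation. This is where ordinarity of $\bar A$ must be used a second time, e.g.\ as follows: the Cartier dual of $\mathcal{A}[p^m]^0$ has special fibre $(\mu_{p^m}^g)^\vee\cong(\Z/p^m\Z)^g$, hence \'etale; a finite flat group scheme over $\mathcal{O}_K$ with \'etale special fibre is \'etale (its module of differentials vanishes on the special fibre, hence vanishes by Nakayama), hence is constant over $\mathcal{O}_K^{sh}$; dualising back gives $\mathcal{A}[p^m]^0\cong\mu_{p^m}^g$ over $\mathcal{O}_K^{sh}$. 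Granting this, $(\mathcal{A}[p^m]^0)_K$ is a form of $\mu_{p^m}^g$ and so has no nontrivial points in any field of characteristic $p$, which gives both your injectivity and the base-change compatibility needed for ordinarity. (For ordinarity alone there is a shortcut avoiding the whole issue: $A[p^\infty]$ surjects onto the \'etale group $(\mathcal{A}[p^\infty]^{\mathrm{\acute et}})_K$ of height $g$, and the $p$-rank of a $g$-dimensional abelian variety is at most $g$.) For comparison, the paper's proof is more elementary and argues in the opposite direction: it writes the $p^m$-kernel of the N\'eron model as a product of finite local $\mathcal{O}_K$-algebras, all with residue field $\F_K$ after a field extension making the torsion rational, deduces \emph{surjectivity} of the reduction map (each local component has nonempty generic fibre whose points are integral and specialise to that component's unique $\F_K$-point), and then counts using $|A[p^m]|\le p^{gm}=|\bar A[p^m]|$, obtaining ordinarity of $A$ as a by-product rather than as a first step.
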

Here $G_K$ acts on $A[p^{\infty}]$ through its action on ${\bar K}^{(1/p^{\infty})}$ and its action on ${\bar A}[p^{\infty}]$
factors through the quotient map $G_K\longrightarrow \Gal({\bar \F}_K/\F_K)$.
\begin{proof}
Let $m$ be any given positive integer.
By replacing $K$ with a suitable finite (maybe inseparable) extension field of it, we may assume that $A[p^m]$ is rational
over $K$ and ${\bar A}[p^m]$ is rational over $\F_K$.
Let $\O$ be the ring of integers of $K$ and denote by $\mathbf{A}$ the N$\acute{\text{e}}$ron model of $A$ over $\O$.
Since $\mathbf{A}[p^m]:=\ker (\mathbf{A}\stackrel{p^m}{\longrightarrow}\mathbf{A})$ is proper, quasi-finite and hence finite over $\O$ and
$\O$ is a complete discrete valuation ring, we have $\mathbf{A}[p^m]=\spec B$ with $B=\prod B_i$ where each $B_i$ is a local ring over $\O$
(\cite{mil80} I.4.2(b)). Since ${\bar A}[p^m]$ is rational over $\F_K$, the residue field of each $B_i$ equals $\F_K$ and hence
the reduction map from $\Hom_{\O} (B,\O)=\Hom_{\O} (B,K)=A[p^m]$ to $\Hom_{\O}(B,\F_K)={\bar A}[p^m]$
is surjective. The lemma is proved, since ${\bar A}[p^m]\simeq (\Z/p^m\Z)^g$ and the order of $A[p^m]$ is at most $p^{gm}$.
\end{proof}
%We apply the above discussion to the case where
%$K$ is a local field and $\bar A$, the reduction of $A$, is an ordinary
%abelian variety. Then (\ref{e:decom}) says:
%\begin{equation}\label{e:bardecom}
%$$
%{\bar {\mathcal{A}}}[p]=(\Z/p\Z)^g\times (\mu_p)^g.
%$$
%\end{equation}
%Because of this, the equality (\ref{e:decom}) holds for $A$ and hence $A/K$ is also ordinary.
%Such a simple observation can be used to deduce several useful facts,
%which we list below in order to be used later. Let ${\bar A}(\F_K)_p$ denote the $p$-Sylow subgroup of ${\bar A}(\F_K)$.
%For this, let
%$L$ be a local field containing $K$ and let
%$A(L)_p$ (resp. $A(L)_{tor,p}$)
%denote the $p$-completion of $A(L)$ (resp. $A(L)_{tor}$).
%Let $A^1(L)$ denote the subgroup of $A(L)$ consisting of points with trivial reduction.
%We use $\F_L$ to denote the constant field of $L$.
%Then $L=\F_L((t))$ for some $t\in L$, and hence $\F_L$ can be regarded as the residue
%field of $L$.
\begin{corollary}\label{c:list} Let $A$ be an abelian variety over a local field $K$ with good ordinary reduction and let $L$ be a local field
containing $K$. Then the following hold:
\begin{description}
\item[(a)] If $P$ is a point in $A(L)$, then
all the $p^m$-division points of $P$ are contained in $A({\bar L}^{(1/p^m)})$.
In particular, the $p^m$-torsion points $A[p^m]\subset A({\bar K}^{(1/p^m)})$.
\item[(b)] If $L/K$ is separable, then the group $A(L)_{tor,p}$ is unramified over $K$,
in the sense that every point in $A(L)_{tor,p}$ is rational over the
maximal unramified sub-extension of $L/K$.
\item[(c)] The subgroup $A^1(L)\subset A(L)$ is a torsion free $\Z_p$-module.
\item[(d)] For each $P\in A^1(L)$ there is a unique $P'\in A^1(L^{(1/p^m)})$ such that
$p^mP'=P$, and vice versa. In other words, we have
\begin{equation}\label{e:pm}
A^1(L)= p^mA^1(L^{(1/p^m)}).
\end{equation}
\item[(e)] Let $A(L^{(1/p^{\infty})})_p:=\bigcup_m A(L^{(1/p^m)})_p$. Then
\begin{equation}\label{e:dp}
A(L^{(1/p^{\infty})})_p=A^1(L^{(1/p^{\infty})})
\times A(L^{(1/p^{\infty})})_{tor,p},
\end{equation}
and
\begin{equation}\label{e:gk}
A(L^{(1/p^{\infty})})_{tor,p}\simeq {\bar A}(\F_L)_p.
\end{equation}
If $L/K$ is Galois, then these are $G_K$-isomorphisms.
\end{description}
\end{corollary}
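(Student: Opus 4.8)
The plan is to treat the five assertions in order, the engine in every case being the factorization \eqref{e:p} of $[p]$ into the purely inseparable Frobenius $\tF$ and the separable Verschiebung $\V$, together with the reduction isomorphism of Lemma \ref{l:lift}. Write $\tF_m\colon A\to A^{(p^m)}$ for the $m$-fold relative Frobenius and $\V_m\colon A^{(p^m)}\to A$ for the $m$-fold Verschiebung, so that $[p^m]=\V_m\circ\tF_m$ with $\V_m$ separable by ordinariness \eqref{e:decom}. For (a) I would solve $[p^m]Q=P$ in two stages: as $\V_m$ is separable its fibres are \'etale, so any $R$ with $\V_m(R)=P$ lies in $A^{(p^m)}(\bar L)$; solving $\tF_m(Q)=R$ then amounts to extracting $p^m$-th roots of the coordinates of $R$, which (since the residue field is perfect) remain separable over $L^{(1/p^m)}$ and hence place $Q$ in $A(\bar L^{(1/p^m)})$. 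Setting $P=0$ yields $A[p^m]\subset A(\bar K^{(1/p^m)})$.

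Parts (b) and (c) fall out of Lemma \ref{l:lift}. For (b), the $G_K$-action on $A[p^\infty]$ factors through $\Gal(\bar\F_K/\F_K)$, so the inertia subgroup of a separable $L/K$ acts trivially and every point of $A(L)_{tor,p}$ is rational over the maximal unramified subextension. For (c), a $p$-power torsion point of $A^1(L)$ has trivial reduction, yet \eqref{e:ord} is injective on $A[p^\infty]$; hence the point is $0$, and since $A^1(L)$ is pro-$p$ it is torsion free.

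Part (d) is the crux, and I expect the main obstacle to be promoting the two factors of $[p^m]$ to isomorphisms on the level of $A^1$. On one hand, because the perfect residue field is unchanged in $L^{(1/p^m)}/L$, the map $\tF_m$ sends $A(L^{(1/p^m)})$ bijectively onto $A^{(p^m)}(L)$: it is injective since $\ker\tF_m$ is infinitesimal, and surjective since a $p^m$-th root of an $L$-rational point of $A^{(p^m)}$ solves the defining equations of $A$; this bijection preserves triviality of reduction, so it restricts to $\tF_m\colon A^1(L^{(1/p^m)})\stackrel{\sim}{\longrightarrow}(A^{(p^m)})^1(L)$. On the other hand $\V_m$, being separable hence \'etale, induces an isomorphism of formal groups at the origin and therefore $\V_m\colon (A^{(p^m)})^1(L)\stackrel{\sim}{\longrightarrow}A^1(L)$. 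Composing gives $[p^m]\colon A^1(L^{(1/p^m)})\stackrel{\sim}{\longrightarrow}A^1(L)$, which is precisely \eqref{e:pm} with the stated uniqueness.

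Finally, for (e) I would begin with the reduction sequence
$$0\longrightarrow A^1(L^{(1/p^\infty)})\longrightarrow A(L^{(1/p^\infty)})_p\longrightarrow \bar A(\F_L)_p\longrightarrow 0,$$
the surjectivity coming from smoothness of the good-reduction model. By (d) the group $A^1(L^{(1/p^\infty)})$ is $p$-divisible, and by (c) it is torsion free, so it is a $\Q_p$-vector space; as $\bar A(\F_L)_p$ is finite the sequence splits. The subgroup $A(L^{(1/p^\infty)})_{tor,p}$ meets $A^1$ trivially and maps isomorphically onto $\bar A(\F_L)_p$ --- injectively by \eqref{e:ord}, surjectively because a residue-Galois-fixed point of $\bar A[p^\infty]$ lifts under Lemma \ref{l:lift} to a $G_{L^{(1/p^\infty)}}$-fixed point of $A[p^\infty]$ --- so it provides a canonical section realizing \eqref{e:dp} and \eqref{e:gk}. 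Every map here is $G_K$-equivariant, giving the $G_K$-isomorphisms when $L/K$ is Galois.
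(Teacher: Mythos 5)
Your treatment of (a), (b), (c) and (e) is essentially the paper's own: (a) is the factorization (\ref{e:p}), with the separable factor keeping division points inside $A(\bar L)$-level points and the Frobenius factor accounting for the purely inseparable extension; (b) and (c) are read off from the reduction isomorphism (\ref{e:ord}); and in (e) both you and the paper split the reduction sequence by the torsion subgroup --- the paper builds the section pointwise, sending $\bar Q\mapsto Q-P'$ with $P'$ supplied by (d), while you split abstractly using that $A^1(L^{(1/p^\infty)})$ is divisible (by (d)) and torsion free (by (c)); both are fine. One small slip in (a): the reason $p^m$-th roots of elements of $\bar L$ lie in $\bar L^{(1/p^m)}=\overline{L^{(1/p^m)}}$ is that the Frobenius twist of a separable minimal polynomial is again separable; perfectness of the residue field is not what is at stake.

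The genuine divergence is (d), and there your argument has a gap. The paper proves (d) with no formal-group input at all: it takes a division point $Q\in A(\bar L^{(1/p^m)})$ from (a), corrects it by a torsion point $R\in A[p^m]$ having the same reduction (available by Lemma \ref{l:lift}), so that $P'=Q-R$ has trivial reduction, and then descends via the cocycle computation ${}^\sigma P'-P'\in A[p^m]\cap A^1(\bar L^{(1/p^m)})=\{0\}$. You instead want each factor of $[p^m]=\V_m\circ\tF_m$ to be bijective on the kernel-of-reduction subgroups --- in effect reversing the paper's own later deduction in Section \ref{s:insp}, where surjectivity of $\V$ on $A^1$ (equation (\ref{e:surj})) is \emph{derived from} (d). The Frobenius half is fine (it is (\ref{e:fr})). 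But the Verschiebung half rests on ``separable hence \'etale, induces an isomorphism of formal groups at the origin,'' and separability of $\V_m$ is a statement about $A$ over the field $K$: it is all that (\ref{e:decom}) and (\ref{e:p}) give you. An isomorphism of formal groups over the \emph{field} does not control $\mathfrak{m}_L$-points; $A^1(L)$ is the group of $\mathfrak{m}_L$-points of the formal group of the smooth model over $\O_L$, so you need an isomorphism of formal groups over $\O_L$, equivalently \'etaleness of $\V_m$ on the N\'eron model, and that requires the Verschiebung of the \emph{special} fiber $\bar A$ to be separable as well --- ordinariness of the reduction, not merely of $A$. The distinction is not pedantic: if $A$ is ordinary but its good reduction is not (possible, ordinariness being an open condition), $\V$ is still separable over $K$, yet already for elliptic curves a valuation count on $[p](x)=g(x^p)$ --- whose linear coefficient in $x^p$ lies in $\mathfrak{m}_{\O_L}$ when $\bar A$ is supersingular --- shows $[p]A^1(L^{(1/p)})$ misses the valuation-one points of $A^1(L)$, so the claimed bijection fails. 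The repair is available from your hypotheses: since $\bar A$ is ordinary, $\V_m$ is \'etale on both fibers of the abelian scheme over $\O_L$, hence \'etale, hence an isomorphism of formal groups over $\O_L$; with that inserted, your (d), and with it the whole proposal, goes through.
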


\begin{proof}
The statement (a) is directly from the arguments given before the lemma, while
(b) and (c) are from the $G_K$-isomorphism (\ref{e:ord}).

To see (d), let $Q\in A({\bar L}^{(1/p^m))}$ be a $p^m$-division
point of $P\in A^1(L)$. Since the reduction ${\bar Q}$ is contained in
${\bar A}[p^m]$, there is a point
$R\in A[p^m]\subset A({\bar L}^{(1/p^m)})$ such that
$P':=Q-R\in A^1({\bar L}^{(1/p^m)})$.  %:=\bigcup_{L'/L\;\text{separable}} A^1(L'^{(1/p^m)}).$$
% Let $P':=Q-R\in A^1(L^{(1/p^m)})$.
Obviously,  $P'$ is also a $p^m$-division
point of $P$, and for $\sigma\in G_L$, we have (from (c))
$$^{\sigma}P'-P'\in A[p^m]\cap A^1(
{\bar L}^{(1/p^m)})=\{0\}.$$

To prove (e), for each $m$ we consider
the following commutative diagram of exact sequences induced from reduction
maps:
$$
\begin{array}{ccccccc}
0\longrightarrow & A^1(L) & \longrightarrow & A(L)_p & \longrightarrow & {\bar A}(\F_L)_p
& \longrightarrow 0\\
{} & \downarrow & {} & \downarrow & {} & \parallel & {} \\
0\longrightarrow & A^1(L^{(1/p^m)}) & \longrightarrow & A(L^{(1/p^m)})_p & \longrightarrow &
{\bar A}(\F_L)_p
& \longrightarrow 0,\\
\end{array}
$$
where the down-arrows are natural inclusions. Suppose $Q\in A(L)_p$ so that its reduction
${\bar Q}\in {\bar A}(\F_L)_p$ has order $p^n$, and let $P=p^nQ\in A^1(L)$.
According to (d), there is a $P'\in A^1(L^{(1/p^n)})$ such that $p^nP'=P$. Then the point
$Q-P'$ is contained in $A(L^{(1/p^n)})_{tor,p}$
and the assignment ${\bar Q}\mapsto Q-P'$ define a natural isomorphism
from ${\bar A}(\F_L)_p$ to $A(L^{(1/p^m)})_{tor,p}$ for large $m$.
Thus, if $m$ is large enough, then there is a natural splitting of the exact sequence
$$
0\longrightarrow  A^1(L^{(1/p^m)})  \longrightarrow  A(L^{(1/p^m)})_p  \longrightarrow
{\bar A}(\F_L)_p
 \longrightarrow 0.
$$
Thus, we obtain (\ref{e:dp}) and (\ref{e:gk}) by letting $m\rightarrow\infty$.
\end{proof}
Next, we consider the case where the abelian variety $A$ is defined over a global field $K$ of characteristic $p$.
Let $L/K$ be a $\Z_p^d$-extension
unramified outside a finite set $S$ of places of $K$.
Let $\Gamma_0$ denote the stabilizer of $A(L)_{tor,p}$ for the action of $\Gamma:=\Gal(L/K)$
and let $L_0$ denote the fixed field of $\Gamma_0$.
We call a pro-$p$ Galois extension pro-$p$ cyclic if its Galois group is
either finite cyclic or isomorphic to $\Z_p$.

\begin{lemma}\label{l:globordin}
Let notations be as above.
Assume that one of the following holds:
\begin{enumerate}
\item $A$ has good, ordinary reduction at every place $v\in S$.
\item $A$ is an elliptic curve
with good, ordinary or split multiplicative reduction at every place of $S$.
\end{enumerate}
Then there is a finite intermediate extension
$K_0/K\subset L_0/K$ such that $L_0/K_0$
is a pro-$p$ cyclic extension.
\end{lemma}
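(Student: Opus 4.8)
The plan is to reduce everything to the single statement that the Galois group $\Gal(L_0/K)=\Gamma/\Gamma_0$ has $\Z_p$-rank at most $1$. Once this is granted, the lemma follows formally: $\Gal(L_0/K)$ is a quotient of $\Gamma\cong\Z_p^d$, hence a finitely generated $\Z_p$-module, so by the structure theorem it is isomorphic to $\Z_p^r\times T$ with $T$ a finite abelian $p$-group and $r\leq 1$. If $r=0$ then $L_0/K$ is finite and we take $K_0=L_0$, so that $\Gal(L_0/K_0)$ is trivial; if $r=1$ the closed subgroup $\Z_p\times\{1\}$ has finite index $|T|$, hence is open, and its fixed field $K_0$ is a finite extension of $K$ with $\Gal(L_0/K_0)\cong\Z_p$. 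In either case $L_0/K_0$ is pro-$p$ cyclic, as required.

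The crux is therefore to bound the rank, and for this I would show that $L_0/K$ is unramified at every place of $K$. By construction $\Gal(L_0/K)$ acts faithfully on $A(L)_{tor,p}\subseteq A[p^{\infty}]$, since $\Gamma_0$ is the kernel of the $\Gamma$-action; thus it suffices to check that at each place $v$ the local inertia acts trivially on $A(L)_{tor,p}$, for by faithfulness this forces the inertia subgroup of $\Gal(L_0/K)$ to be trivial. Outside $S$ this is immediate because $L_0\subseteq L$ and $L/K$ is unramified outside $S$. At a place $v\in S$ I would argue locally on the completion. In case $(1)$, where $A$ has good ordinary reduction at $v$, Lemma \ref{l:lift} identifies $A[p^{\infty}]$ with $\bar A[p^{\infty}]$ as $G_{K_v}$-modules and shows the action factors through $\Gal(\bar\F_{K_v}/\F_{K_v})$; this action is unramified, so inertia acts trivially. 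In case $(2)$, where $A=E$ is an elliptic curve with split multiplicative reduction at $v$, I would use the Tate parametrization $E(\bar K_v)\cong \bar K_v^{\times}/q^{\Z}$: a point of $p$-power order is represented by a power of $q^{1/p^m}$, because $\mu_{p^m}$ has no nontrivial separable points in characteristic $p$, and $q^{1/p^m}$ lies in the purely inseparable extension $K_v^{(1/p^m)}$ and is therefore fixed by $G_{K_v}$. Hence $G_{K_v}$, and in particular its inertia subgroup, acts trivially on $E[p^{\infty}]$.

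Combining the two cases, inertia at every place acts trivially on $A(L)_{tor,p}$, so $L_0/K$ is everywhere unramified; it is moreover abelian, being a quotient of $\Gamma$, and pro-$p$. The final step is to invoke unramified class field theory for the function field $K=\F_q(C)$: the Galois group of the maximal everywhere-unramified abelian extension is the profinite completion of the divisor class group $\Pic(C)(\F_q)$, which sits in an exact sequence $0\to\Pic^0(C)(\F_q)\to\Pic(C)(\F_q)\stackrel{\deg}{\longrightarrow}\Z$. Since $\Pic^0(C)(\F_q)$ is finite, the pro-$p$ part of this completion has $\Z_p$-rank exactly $1$, the single $\Z_p$ being the constant-field extension. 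Therefore the pro-$p$ abelian quotient $\Gal(L_0/K)$ has $\Z_p$-rank at most $1$, which is precisely what the reduction in the first paragraph requires.

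I expect the main obstacle to be the local analysis at the split multiplicative places in case $(2)$: one must verify carefully that the characteristic-$p$ Tate parametrization renders the entire $p$-primary torsion purely inseparable, so that the separable Galois group acts trivially, a phenomenon with no analogue in characteristic zero. The class field theory input is standard, but it is worth checking that the potentially large geometric $p$-rank (Hasse--Witt invariant) of the Jacobian contributes no extra $\Z_p$-extensions; it does not, precisely because only the finite group $\Pic^0(C)(\F_q)$ of $\F_q$-rational points, and not the full Tate module, enters the unramified reciprocity map.
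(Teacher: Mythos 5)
Your proof is correct and follows essentially the same route as the paper: the same local analysis (good ordinary reduction makes the $p$-primary torsion unramified via Lemma \ref{l:lift}, and the Tate parametrization makes it purely inseparable, hence Galois-fixed, at split multiplicative places) shows $L_0/K$ is everywhere unramified, after which unramified class field theory---your Picard-group formulation is equivalent to the paper's idelic exact sequence $0\to C_{K,p}\to W_{K,p}\to\Z_p\to 0$---bounds the $\Z_p$-rank of $\Gal(L_0/K)$ by $1$. The only cosmetic difference is that you extract $K_0$ from the structure theorem applied to $\Gal(L_0/K)$, whereas the paper chooses a subgroup $W_0\simeq\Z_p$ of $W_{K,p}$ and takes $K_0$ to be its fixed field in $L_0$.
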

\begin{proof}
Note that if $A$ has good, ordinary reduction at a place $v$, then
$A/K_v$ is ordinary and hence $K_vL_0$ is unramified over $K_v$ (Corollary \ref{c:list}(b)).
Also, if $A$ is an elliptic curve with split multiplicative reduction at some $v\in S$ so that the Tate's curve has period $Q\in K_v^*$,
then $K_vL_0={\overline {K_v}}\cap ( \cup_{n=1}^{\infty} K_v(Q^{1/p^n}))=K_v$. This shows that $L_0/K$ is everywhere unramified.

We then apply the global class field theory (c.f. \cite{t67})
which tells us that the Galois group $W_{K,p}$
of the maximal everywhere unramified pro-$p$ abelian extension of $K$ fits into an exact sequence
$$0\longrightarrow C_{K,p} \longrightarrow W_{K,p}\stackrel{\deg}{\longrightarrow}
\Z_p\longrightarrow 0,$$
where $C_{K,p}$ is the $p$-Sylow subgroup of the class group of $K$ and $\deg$ is induced
from the degree map on the group of ideles. We choose a subgroup $W_0\simeq\Z_p$ of $W_{K,p}$
and choose $K_0$ to be the fixed field of $W_0$ under the action of $W_{K,p}$ on $L_0$.
\end{proof}

\end{subsection}

\begin{subsection}{Cohomology groups of the torsion points}\label{subsec:torsion}
In the next step, our goal is to bound, for $i=1,2$, the order of the cohomology group
$\coh^i(L'/K, A(L')_p)$, where $L'/K$ is a finite intermediate field
extension of some given $\Z_p^d$-extension. To achieve this goal, we first
establish the following lemma in which $G$ is
a finite $p$-abelian group with $d$ generators acting on a finite $p$-abelian group 
$M$. We assume that there is a subgroup $H_0\subset G$
such that $G/H_0$ is cyclic and $M^{H_0}=M$.
%For each subgroup $H\subset G$, let $M(H)$ denotes $M^H$.

\begin{lemma}\label{l:incor} Let notations and conditions be as above.
Then we have
\begin{equation}\label{e:in1}
|\coh^1(G, M)|\leq |M^G|^{d},
\end{equation}
and
\begin{equation}\label{e:in2}
 |\coh^2(G, M)|\leq |M^G|^{d^2}.
\end{equation}
\end{lemma}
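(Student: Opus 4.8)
The plan is to exploit the hypothesis that $H_0$ acts trivially on $M$, so that the $G$-action factors through the cyclic quotient $G/H_0$, hence through a cyclic group $C$ with $M^G=M^C$. The heart of the argument is a structural observation: although $G$ need \emph{not} split as an internal direct product of a cyclic summand carrying the whole action and a complementary summand acting trivially (such a splitting can genuinely fail, e.g.\ for $G=\Z/p^3\times\Z/p$ with $C=\Z/p^2$), one can always arrange a \emph{generating set} with the desired features. Concretely, I would fix a generator $\gamma$ of $C$ and an element $\sigma\in G$ mapping to $\gamma$; since $\sigma$ then has height $0$ in the $p$-group $G$, the quotient $G/\langle\sigma\rangle$ needs at most $d-1$ generators (here $d(\,\cdot\,)$ denotes the minimal number of generators). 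As $\ker(\text{action})$ surjects onto $G/\langle\sigma\rangle$, I may lift a minimal generating set of $G/\langle\sigma\rangle$ into $\ker(\text{action})$ to obtain a subgroup $G'$ with $d(G')\le d-1$, acting trivially on $M$, and with $G=\langle\sigma\rangle+G'$. Then $G'':=G/G'$ is cyclic (generated by the image of $\sigma$), the action factors through $G''\twoheadrightarrow C$, and crucially $M^{G''}=M^G$ while $M^{G'}=M$.

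For $\coh^1$ I would feed $1\to G'\to G\to G''\to1$ into the five-term inflation--restriction sequence
$$0\longrightarrow\coh^1(G'',M)\longrightarrow\coh^1(G,M)\longrightarrow\coh^1(G',M)^{G''}.$$
The left term is bounded by $|M^{G''}|=|M^G|$ because $G''$ is cyclic and $M$ is finite. For the right term, $G'$ acts trivially, so $\coh^1(G',M)=\Hom(G',M)$, and taking $G''$-invariants (the conjugation action of the abelian group $G$ on $G'$ being trivial) yields $\Hom(G',M)^{G''}=\Hom(G',M^G)$, of order at most $|M^G|^{d(G')}\le|M^G|^{d-1}$. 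Multiplying gives $|\coh^1(G,M)|\le|M^G|^d$.

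For $\coh^2$ I would use the Lyndon--Hochschild--Serre spectral sequence $E_2^{s,t}=\coh^s(G'',\coh^t(G',M))$ and bound the three groups contributing to total degree $2$. Since $G''$ is cyclic one has $|E_2^{2,0}|=|\coh^2(G'',M)|\le|M^G|$, and as above $|E_2^{1,1}|=|\coh^1(G'',\Hom(G',M))|\le|\Hom(G',M^G)|\le|M^G|^{d-1}$. The remaining term $E_2^{0,2}=\coh^2(G',M)^{G''}$ is the delicate one: because $G'$ acts trivially I would invoke the coefficient-natural universal coefficient sequence
$$0\longrightarrow\Ext^1(G',M)\longrightarrow\coh^2(G',M)\longrightarrow\Hom(\textstyle\bigwedge^2 G',M)\longrightarrow0,$$
whose $G''$-invariants I estimate termwise. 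The $\Hom$-part gives $\Hom(\bigwedge^2G',M^G)$, of order at most $|M^G|^{\binom{d-1}{2}}$, while $\Ext^1(G',M)=\bigoplus_j M/p^{c_j}M$ for $G'\cong\bigoplus_j\Z/p^{c_j}$.

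The step I expect to be the main obstacle is controlling these $\Ext$-invariants $(M/p^{c}M)^{G''}$, since the invariants of a quotient module are not directly comparable to $|M^G|$. Here I would run the long exact cohomology sequence of $0\to p^{c}M\to M\to M/p^{c}M\to0$ over the cyclic group $G''$, bounding $|(M/p^{c}M)^{G''}|$ by $|M^{G''}/(p^{c}M)^{G''}|\cdot|\coh^1(G'',p^{c}M)|\le|M^G|^2$; summing over the $\le d-1$ cyclic factors of $G'$ gives $|\Ext^1(G',M)^{G''}|\le|M^G|^{2(d-1)}$. Collecting exponents, $|\coh^2(G,M)|\le|M^G|^{\,1+(d-1)+\binom{d-1}{2}+2(d-1)}$, and an elementary check shows the exponent equals $\tfrac12\bigl((d-1)^2+5(d-1)+2\bigr)\le d^2$ for all $d\ge1$, which is the desired estimate. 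The only other point needing care is the existence of the generating subgroup $G'$ with $d(G')\le d-1$; this is elementary but must be argued via heights in $G$ rather than through any (possibly nonexistent) direct-product decomposition.
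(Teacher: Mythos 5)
Your proof is correct, and it takes a genuinely different route from the paper's. The paper works with the given subgroup $H_0$ itself, which may need as many as $d$ generators: for (\ref{e:in1}) it runs inflation--restriction along $H_0\subset G$ and rescues the a priori too-large image bound $|\Hom(H_0,M^G)|$ by the cocycle identity $\rho(p^m e_1)=N_{G/H_0}(\rho(e_1))$, so that the norm-image factors appearing in the kernel and image estimates cancel; for (\ref{e:in2}) it inducts on $d$, choosing a cyclic subgroup $H_1\subset H_0$ with $G/H_1$ generated by $d-1$ elements, and applies Hochschild--Serre with the cyclic group as subgroup and the $(d-1)$-generated group as quotient. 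Your structural lemma --- replace $H_0$ by a subgroup $G'$ of the kernel of the action, with at most $d-1$ generators and with $G''=G/G'$ cyclic and carrying the whole action --- reverses these roles: you apply Hochschild--Serre with the $(d-1)$-generated, trivially-acting group as subgroup and the cyclic group as quotient. That makes (\ref{e:in1}) immediate, with no norm bookkeeping, and eliminates the induction from (\ref{e:in2}), at the price of invoking the universal-coefficient sequence to control $E_2^{0,2}=\coh^2(G',M)^{G''}$; note that your bound $|(M/p^cM)^{G''}|\le|M^G|^2$ via the long exact sequence over the cyclic $G''$ is essentially the same device the paper uses for its term (\ref{e:3}). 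Your exponent $\tfrac12\bigl((d-1)^2+5(d-1)+2\bigr)$ is strictly smaller than $d^2$ once $d\ge 3$, so your bound is marginally sharper; the paper's induction, in exchange, is shorter and needs no Schur-multiplier/universal-coefficient input. One small repair: your assertion that $\sigma$ has height $0$ presupposes that the action of $G$ on $M$ is nontrivial, since it rests on $\sigma$ mapping to a generator of a nontrivial cyclic group $C$. In the trivial-action case your $\sigma$ could lie in $pG$; but then the lemma is immediate ($\coh^1(G,M)=\Hom(G,M)$ and, by universal coefficients, $|\coh^2(G,M)|\le|M|^{d+\binom{d}{2}}\le|M|^{d^2}$), or one can simply choose $\sigma$ of height $0$ at the outset.
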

\begin{proof} Consider the inflation-restriction exact sequence:
$$0\longrightarrow \coh^1(G/H_0, M^{H_0})\longrightarrow
\coh^1(G, M)\stackrel{res}{\longrightarrow}
\coh^1(H_0,  M)^{G/H_0}.$$
We shall bound the orders of $\ker(res)$ and $\image(res)$.
Since $G/H_0$ is cyclic, by computing the Herbrand quotient, we see that the order of
$\coh^1(G/H_0, M^{H_0})$ equals $|M^{G}/\mathcal{N}|$,
where $\mathcal{N}$ is the image of the norm map $N_{G/H_0}:M=M^{H_0}\longrightarrow
M^{G}$. Also, since $M$
is fixed by the action of $H_0$, we have
$$\coh^1(H_0,  M)^{G/H_0}=\Hom (H_0, M^{G}).$$
To proceed further, choose a basis $e_1,...,e_c$ of $G$, for some $c\leq d$, so that
$e_1':=p^me_1,e_2,...,e_c$, for some non-negative integer $m$, form a basis of $H_0$.
The cocycle condition implies that if $\rho$ is a $1$-cocycle representing a class in
$\coh^1(G, M)$, then the value $\rho(e_1')$ equals $N_{G/H_0}(\rho(e_1))$.
This implies that
the image of $res$ must be contained in the subgroup
$$\{ \phi\in \Hom (H_0, M^{G})\; | \; \phi(e_1')\in\mathcal{N}\},$$
whose order is bounded by $|M^{G}|^{c-1}\cdot |\mathcal{N}|$.
Therefore, the inequality (\ref{e:in1}) holds, since
$$| \ker(res)|\cdot |\image(res)|\leq
|M^{G}/\mathcal{N}|\cdot |M^{G}|^{c-1}\cdot |\mathcal{N}|.
$$

We prove the inequality (\ref{e:in2}) by induction on $d$.
The case where $d=1$ is easy, since
$\coh^2(G,M)=M^G/N_{G}(M)$.
If $d>1$,  we choose a cyclic subgroup $H_1\subset H_0$ such that
$G/H_1$ is generated by $d-1$ elements.
%And denote $H_1=(\Gamma_1+\Gamma')/\Gamma'$. Then $H_1$ is cyclic.
According to
the associated Hochschild-Serre
spectral sequence (cf. \cite{sh72}),
we have exact sequences
$$0\longrightarrow E_1^2\longrightarrow \coh^2(G, M)\longrightarrow
\coh^2(H_1, M)^{G/H_1}
$$
and
$$\coh^2(G/H_1, M^{H_1})\longrightarrow E_1^2
\longrightarrow \coh^1(G/H_1, \coh^1(H_1,M)).
$$
Therefore,
the desired bound for the order of $\coh^2(G,M)$
can be derived from the following lemma.
\end{proof}

\begin{lemma}\label{l:ll}
Under the above assumptions, we have
\begin{equation}\label{e:1}
|\coh^2(G/H_1, M^{H_1})|\leq |M^G|^{(d-1)^2},
\end{equation}
\begin{equation}\label{e:2}
|\coh^1(G/H_1, \coh^1(
H_1, M))|\leq |M^G|^{d-1},
\end{equation}
\begin{equation}\label{e:3}
|\coh^2(
H_1, M)^{G/H_1}|\leq |M^G|^{d}.
\end{equation}
\end{lemma}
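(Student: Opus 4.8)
The plan is to derive all three inequalities from the already-established inequality~(\ref{e:in1}) and from the inductive hypothesis for~(\ref{e:in2}) in the $(d-1)$-generator case, after first making the cohomology of the cyclic group $H_1$ explicit. Since $H_1\subseteq H_0$ and $H_0$ acts trivially on $M$, the group $H_1$ also acts trivially, so $M^{H_1}=M$. Writing $p^a=|H_1|$, the trivial action yields $\coh^1(H_1,M)\cong M[p^a]$ (evaluation at a generator of $H_1$) and the canonical identification $\coh^2(H_1,M)\cong M/p^aM$, the norm being multiplication by $p^a$. Because $G$ is abelian its conjugation action on $H_1$ is trivial, so in both identifications the residual $G/H_1$-action is induced from the $G$-action on $M$; in particular it factors through $G/H_0$, and one has $\coh^2(H_1,M)^{G/H_1}=(M/p^aM)^G$ and $(M[p^a])^{G/H_1}=M^G\cap M[p^a]$. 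I will use throughout that $M^{G/H_1}=M^G$, and that the subgroup $\bar H_0:=H_0/H_1\subseteq G/H_1$ satisfies $(G/H_1)/\bar H_0\cong G/H_0$ cyclic with $\bar H_0$ acting trivially, so that the hypotheses of Lemma~\ref{l:incor} transfer to the group $G/H_1$ acting on any of the modules in play.

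For~(\ref{e:1}), I would view $M^{H_1}=M$ as a $G/H_1$-module and apply the inductive hypothesis for~(\ref{e:in2}) to the action of $G/H_1$ (generated by $d-1$ elements) on $M$, with $\bar H_0$ playing the role of $H_0$; this gives $|\coh^2(G/H_1,M)|\leq |M^{G/H_1}|^{(d-1)^2}=|M^G|^{(d-1)^2}$. For~(\ref{e:2}), I would feed the $G/H_1$-module $\coh^1(H_1,M)\cong M[p^a]$ into inequality~(\ref{e:in1}) for the group $G/H_1$ (again with $\bar H_0$), obtaining $|\coh^1(G/H_1,\coh^1(H_1,M))|\leq |(M[p^a])^{G/H_1}|^{d-1}$; since $(M[p^a])^{G/H_1}=M^G\cap M[p^a]\subseteq M^G$, this is at most $|M^G|^{d-1}$, as required.

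The delicate estimate is~(\ref{e:3}), namely $|(M/p^aM)^G|\leq |M^G|^d$, and this is where I expect the real work. The short exact sequence of $G$-modules $0\to p^aM\to M\to M/p^aM\to 0$ gives an exact sequence $0\to (p^aM)^G\to M^G\to (M/p^aM)^G\stackrel{\delta}{\longrightarrow}\coh^1(G,p^aM)$. A crude estimate—bounding $|(M/p^aM)^G|$ by $|M^G/(p^aM)^G|\cdot|\coh^1(G,p^aM)|$ and using~(\ref{e:in1}) for the $G$-module $p^aM$ (which satisfies $(p^aM)^{H_0}=p^aM$) to get $|\coh^1(G,p^aM)|\leq |(p^aM)^G|^d\leq |M^G|^d$—overshoots by one factor, yielding only $|M^G|^{d+1}$. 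The trick is to read the exact sequence more precisely: it gives $|(M/p^aM)^G|=\frac{|M^G|}{|(p^aM)^G|}\cdot|\image(\delta)|$, and since $\image(\delta)\subseteq\coh^1(G,p^aM)$ we still have $|\image(\delta)|\leq |(p^aM)^G|^d$ by~(\ref{e:in1}). Now one factor of $|(p^aM)^G|$ cancels against the denominator, leaving $|(M/p^aM)^G|\leq |M^G|\cdot|(p^aM)^G|^{d-1}\leq |M^G|^d$, where the last step uses $(p^aM)^G\subseteq M^G$. This cancellation is the crux of the argument; the remaining verifications—that the hypotheses of Lemma~\ref{l:incor} pass to the modules $M$, $M[p^a]$, $p^aM$ and to the quotient $G/H_1$—are routine. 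Note finally that there is no circularity: inequality~(\ref{e:in1}) is available for every $d$, while~(\ref{e:in2}) is invoked only for $G/H_1$, which has strictly fewer generators.
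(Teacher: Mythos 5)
Your proof is correct, and for (\ref{e:1}) and (\ref{e:2}) it coincides with the paper's argument: (\ref{e:1}) is exactly the induction hypothesis applied to the $(d-1)$-generated group $G/H_1$ acting on $M=M^{H_1}$, and (\ref{e:2}) is (\ref{e:in1}) applied to the pair $(G/H_1,\coh^1(H_1,M))$ together with the bound $|\coh^1(H_1,M)^{G/H_1}|\le|M^G|$. The one genuine divergence is in (\ref{e:3}). Both you and the paper start from $\coh^2(H_1,M)^{G/H_1}=(M/p^aM)^{G/H_1}$ (you write $p^a$, the paper $p^l$, for $|H_1|$) and use the sequence induced by $0\to p^aM\to M\to M/p^aM\to 0$; but the paper takes $G/H_1$-cohomology and asserts exactness of $M^G\to(M/p^aM)^{G/H_1}\to\coh^1(G/H_1,p^aM^G)$, i.e.\ it places the connecting map into cohomology with values in the \emph{trivial} module $p^aM^G$, which it then bounds by $|\Hom(G/H_1,p^aM^G)|\le|M^G|^{d-1}$ using that $G/H_1$ has $d-1$ generators. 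You instead use the ordinary long exact sequence of $G$-cohomology, bound $|\image(\delta)|\le|\coh^1(G,p^aM)|\le|(p^aM)^G|^{d}$ by (\ref{e:in1}) --- legitimate, since $H_0$ acts trivially on $p^aM$ --- and then recover the apparently lost factor by cancelling one copy of $|(p^aM)^G|$ against the kernel term $|M^G|/|(p^aM)^G|$. The two routes land on the same bound $|M^G|^d$. Yours is marginally weaker at the intermediate step (exponent $d$ rather than $d-1$ on the $\coh^1$ term) but is entirely self-contained: it needs nothing beyond the standard long exact sequence, whereas the paper's formulation requires one to justify that the connecting homomorphism, whose natural target is $\coh^1(G/H_1,p^aM)$, can be taken with values in $p^aM^G$; your cancellation sidesteps that verification altogether. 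Your explicit checks that the hypotheses of Lemma \ref{l:incor} descend to $(G/H_1,H_0/H_1)$ and to the modules $M$, $M[p^a]$, $p^aM$, and your remark that no circularity arises because (\ref{e:in1}) holds for every $d$ while (\ref{e:in2}) is invoked only for the smaller group $G/H_1$, are correct and make explicit what the paper leaves implicit.
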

\begin{proof}
The inequality (\ref{e:1}) is in fact the induction hypothesis.
% says $|\coh^2(G/H_1,  M(\Gamma_1+\Gamma')))|
%\leq M(\Gamma)^{(d-1)d/2}$. For the case where $k=1$,
To show (\ref{e:2}), we first note that since $H_1$ is cyclic and acting trivially
on $M$, the group $N:=\coh^1(H_1, M)$
satisfies $N^G=\Hom (H_1, M^G)$ and $|N^G|\leq |M^G|$.
In view of this, we see that the inequality
(\ref{e:in1}) for the case where the pair $(G,M)=(G/H_1,N)$ implies (\ref{e:2}).

Again, since $H_1$ is cyclic, acting trivially on $M$, we have
$$\coh^2(H_1, M)^{G/H_1}=(M/p^lM)^{G/H_1},$$
where
$p^l$ is the order of $H_1$. To bound the order of this group,
we consider the exact sequence
$$M^G \longrightarrow (M/p^lM)^{G/H_1} \longrightarrow \coh^1(G/H_1, p^lM^G),$$
which is induced from
$$0\longrightarrow p^lM\longrightarrow M
\longrightarrow
M/p^lM \longrightarrow 0.$$
We have
$$|\coh^1(G/H_1,p^lM^G)|=|\Hom (G/H_1,p^lM^G)|\leq |M^G|^{d-1}.$$

\end{proof}

\begin{corollary}\label{c:loc}
Suppose that $K$ is a local field of characteristic $p$,
$L/K$ is a $\Z_p^d$-extension and $A/K$ is an abelian variety with good, ordinary reduction.
Then for every finite intermediate extension $L'/K\subset L/K$ we have
$$
|\coh^1(L'/K, A(L')_{tor,p})|\leq |A(K)_{tor,p}|^d,
$$
and
$$
|\coh^2(L'/K, A(L')_{tor,p})|\leq |A(K)_{tor,p}|^{d^2}
$$
\end{corollary}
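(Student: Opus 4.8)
The plan is to deduce both estimates from Lemma \ref{l:incor}, applied to the finite group $G:=\Gal(L'/K)$ acting on the finite module $M:=A(L')_{tor,p}$. First I would check that $(G,M)$ meets the standing hypotheses of that lemma. Since $\Gamma=\Gal(L/K)\cong\Z_p^d$ is abelian, the intermediate field $L'$ is Galois over $K$ and $G$ is a finite quotient of $\Z_p^d$; in particular $G$ is a finite abelian $p$-group generated by at most $d$ elements. The module $M$ is finite: being $p$-primary torsion rational over the local field $L'$, it is identified through the reduction isomorphism (\ref{e:ord}) of Lemma \ref{l:lift} with a subgroup of ${\bar A}(\F_{L'})_p$, and $\F_{L'}$ is a finite field because $L'/K$ is a finite extension.

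The crucial point is to produce a subgroup $H_0\subset G$ with $G/H_0$ cyclic and $M^{H_0}=M$. Here I would appeal directly to Corollary \ref{c:list}(b): every point of $M=A(L')_{tor,p}$ is rational over the maximal unramified subextension $L'_{\mathrm{ur}}$ of $L'/K$. Consequently the subgroup $H_0:=\Gal(L'/L'_{\mathrm{ur}})$ fixes $M$ pointwise, so that $M^{H_0}=M$, while the quotient $G/H_0=\Gal(L'_{\mathrm{ur}}/K)$ is cyclic, being the Galois group of a finite unramified extension of the local field $K$ and hence a quotient of the procyclic group $\Gal({\bar\F}_K/\F_K)\cong\hat\Z$. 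This is exactly the configuration required by Lemma \ref{l:incor}.

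With the hypotheses verified, Lemma \ref{l:incor} gives $|\coh^1(G,M)|\leq|M^G|^{d}$ and $|\coh^2(G,M)|\leq|M^G|^{d^2}$. It then remains to identify the invariants. By Galois descent for the finite Galois extension $L'/K$ one has $A(L')^{G}=A(K)$, whence $M^G=A(L')_{tor,p}\cap A(K)=A(K)_{tor,p}$. Substituting this equality into the two inequalities produces precisely the asserted bounds $|\coh^1(L'/K,A(L')_{tor,p})|\leq|A(K)_{tor,p}|^{d}$ and $|\coh^2(L'/K,A(L')_{tor,p})|\leq|A(K)_{tor,p}|^{d^2}$.

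I expect the one genuine point to be the construction in the second paragraph, namely the verification that the action of $G$ on $M$ has cyclic image. This is precisely where the good ordinary reduction hypothesis enters, by way of Lemma \ref{l:lift} and its corollary: the unramifiedness of the $p$-primary torsion forces the Galois action to factor through the procyclic group $\Gal({\bar\F}_K/\F_K)$, and without this factorization there would be no reason for Lemma \ref{l:incor} to apply. The remaining steps---the generator count for $G$, the finiteness of $M$, and the descent computation of $M^G$---are routine once this is in hand.
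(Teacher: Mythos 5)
Your proposal is correct and follows essentially the same route as the paper: the paper also applies Lemma \ref{l:incor} with $G=\Gal(L'/K)$ and $H_0$ the subgroup fixing the maximal unramified subextension (note that your $L'_{\mathrm{ur}}$ equals the paper's $L_0\cap L'$, since any unramified subextension of $L'/K$ lies in the maximal unramified subextension $L_0$ of $L/K$), with Corollary \ref{c:list}(b) supplying the hypothesis $M^{H_0}=M$ exactly as you argue. The paper merely leaves implicit the routine verifications you spell out (finiteness of $M$, the generator count for $G$, cyclicity of $\Gal(L'_{\mathrm{ur}}/K)$, and the identification $M^G=A(K)_{tor,p}$).
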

\begin{proof}
Corollary \ref{c:list}(b)) says that $A(L)_{tor,p}$ is unramified.
Let $L_0/K$ be the maximal unramified subextension of $L/K$ and put
$G=\Gal(L'/K)$, $H_0=\Gal(L'/L_0\cap L')$.
Then apply Lemma \ref{l:incor}.
\end{proof}

\begin{corollary}\label{c:glob}
Suppose that $A$, $K$, $L$ satisfy the condition of {\em{ Lemma \ref{l:globordin}}}.
Let $F/K$ be a finite intermediate extension of $L/K$.
Then for all intermediate extension $L'/F\subset L/F$, the orders of
$\coh^1(L'/F, A(L')_{tor,p})$
and $\coh^2(L'/F, A(L')_{tor,p})$ are bounded. Furthermore, if $d=1$, then the bounds can be chosen to be independent of $F$.
\end{corollary}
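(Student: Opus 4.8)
The plan is to reproduce the argument of Corollary \ref{c:loc} over the global base, replacing the local fact that the maximal unramified extension is procyclic by Lemma \ref{l:globordin}. Fix a finite intermediate $F/K\subset L/K$ and a finite intermediate $L'/F\subset L/F$, and set $G=\Gal(L'/F)$ and $M=A(L')_{tor,p}$. Since $L'$ is again a global field, $M$ is finite, and since $\Gal(L/F)$ is a finite-index subgroup of $\Gamma\cong\Z_p^d$, the group $G$ is a finite abelian $p$-group on at most $d$ generators. Let $H_0\subset G$ be the kernel of the action of $G$ on $M$; because the $\Gamma$-action on $A(L)_{tor,p}$ factors through $\Gal(L_0/K)$, we have $H_0=\Gal(L'/(L'\cap L_0)F)$ and $G/H_0\cong\Gal((L'\cap L_0)/(L_0\cap F))$, a subquotient of $\Gal(L_0/K)$. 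By construction $M^{H_0}=M$, so the hypotheses of Lemma \ref{l:incor} are met once $G/H_0$ is cyclic, in which case $|\coh^1(G,M)|\le|A(F)_{tor,p}|^{d}$ and $|\coh^2(G,M)|\le|A(F)_{tor,p}|^{d^2}$, with $M^G=A(F)_{tor,p}$.

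First I would prove the non-uniform boundedness. The only point to arrange is the cyclicity of $G/H_0$. By Lemma \ref{l:globordin}, $\Gal(L_0/K_0)$ is cyclic and $[K_0:K]$ is a fixed power of $p$ (coming from the finite group $C_{K,p}$ in that proof); hence if $K_0\subseteq F$ then $G/H_0$ is a subquotient of the cyclic group $\Gal(L_0/K_0)$ and is cyclic. To reduce to this case I would pass from $F$ to $FK_0$: the extension $FK_0/F$ is a $p$-extension of degree dividing $[K_0:K]$, and the inflation–restriction sequence for the normal subgroup $\Gal(L'/(FK_0\cap L'))$ of $G$ changes the orders of $\coh^1$ and $\coh^2$ only by factors bounded in terms of $\coh^i(\Gal((FK_0\cap L')/F),-)$, which are finite for the fixed field $F$. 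Applying Lemma \ref{l:incor} over $FK_0$ then yields finite bounds, depending on $F$, for $|\coh^i(L'/F,M)|$, $i=1,2$, uniformly in $L'$. This settles the first assertion, since $A(F)_{tor,p}$ is finite.

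For the case $d=1$ the group $G=\Gal(L'/F)$ is automatically cyclic, being a quotient of $\Z_p$, so no enlargement of $F$ is needed; since $G$ is cyclic and $M$ is finite, $\coh^1$ and $\coh^2$ have equal order, namely that of $\hat{\coh}^0(G,M)$, and
\[
|\coh^1(L'/F,M)|=|\coh^2(L'/F,M)|=\big|A(F)_{tor,p}/N_{L'/F}\,A(L')_{tor,p}\big|.
\]
Bounding this norm cokernel independently of $F$ and $L'$ is the main obstacle: unlike in the local case, $|A(F)_{tor,p}|$ itself need not stay bounded as $F$ grows (for instance along the constant $\Z_p$-extension the $p$-power torsion accumulates), so the crude estimate $|A(F)_{tor,p}|$ will not suffice. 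The plan is to exploit divisibility. In characteristic $p$ only the étale part of $A[p^{\infty}]$ carries rational points, so $A(L)_{tor,p}$ is contained in $(\Q_p/\Z_p)^{g}$ and has finite corank; write $0\to D\to A(L)_{tor,p}\to C\to0$, with $D$ the maximal divisible subgroup and $C$ finite, both $\Gamma$-stable. I would then compare the norm maps attached to $D$, to $A(L)_{tor,p}$, and to $C$: on the divisible module $D$ the norm from a sufficiently deep layer is surjective onto the relevant invariants, so its contribution to the cokernel vanishes, while the contribution of $C$ is bounded by $|C|$. Making the phrase ``sufficiently deep layer'' uniform — that is, showing the cokernel of the norm on the divisible part stays bounded as $L'$ varies, which is really a statement about the $\Z_p[[\Gamma]]$-module structure of the Tate module of $D$ — is the delicate step, and is where the argument must be carried out with care.
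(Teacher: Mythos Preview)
Your treatment of the first statement is essentially the paper's: pass to a base containing $K_0$, apply Lemma~\ref{l:incor} (the paper uses $K_0':=L'\cap K_0$ rather than $FK_0$, a cosmetic difference), and return to $F$ via Hochschild--Serre. One small inaccuracy: your $H_0$ is the stabilizer of $A(L')_{tor,p}$, which may be strictly larger than $\Gal(L'/(L'\cap L_0)F)$; but since it \emph{contains} the latter, $G/H_0$ is still a quotient of a cyclic group, so nothing is lost. For $d=1$ you also isolate the right structure, the filtration $0\to D\to A(L)_{tor,p}\to T\to 0$ with $D$ divisible and $T$ finite, and correctly locate the problem in the norm on $D$.

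The step you flag as ``delicate'' is exactly the step the paper carries out, and it is short --- no appeal to the $\Lambda_{\Gamma}$-module structure of a Tate module is needed. Choose $n_0$ so that $A[p^2]\cap A(L)\subset A(K_{n_0})$. For $n\ge n_0$ and $Q\in D\cap A(K_n)$, divisibility of $D$ gives $Q'\in D$ with $pQ'=Q$; the cocycle $\sigma\mapsto{}^{\sigma}Q'-Q'$ is a homomorphism from $\Gal(L/K_n)\cong\Z_p$ into the $p$-torsion group $A[p]\cap D\subset A(K_n)$, hence is trivial on $\Gal(L/K_{n+1})$, so $Q'\in D\cap A(K_{n+1})$. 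Writing
\[
Q=\mathrm{N}_{K_{n+1}/K_n}(Q')-\sum_{\sigma\in\Gal(K_{n+1}/K_n)}({}^{\sigma}Q'-Q'),
\]
each summand ${}^{\sigma}Q'-Q'$ lies in $A[p]\cap D\subset p\,(A[p^2]\cap D)\subset p\,(D\cap A(K_n))=\mathrm{N}_{K_{n+1}/K_n}(D\cap A(K_n))$. Thus $D\cap A(K_n)\subset\mathrm{N}_{K_{n+1}/K_n}(D\cap A(K_{n+1}))$, and iterating gives $D\cap A(K_n)\subset\mathrm{N}_{K_m/K_n}(D\cap A(K_m))$ for all $m\ge n\ge n_0$. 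Consequently the norm cokernel on $A(L')_{tor,p}$ has order at most $|T|$ whenever $F=K_n$ with $n\ge n_0$, which supplies the uniform bound you were missing and completes your outline into the paper's proof.
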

\begin{proof}
Let $K_0\subset L_0\subset L$ be as in Lemma \ref{l:globordin}.
Without loss of generality, we may assume that $F=K$ for the proof of the first statement.
Put $K_0'=L'\cap K_0$, $G=\Gal(L'/K_0')$ and $H_0=\Gal(L'/L_0\cap L')$.
Obviously, $A(K_0')_{tor,p}$ is contained in $A(K_0)_{tor,p}$. Therefore, from
Lemma \ref{l:incor} we see that for $j=0,1,2$ the order of the $\Gal(K_0'/K)$-module
$\coh^j(L'/K_0', A(L')_{tor,p})$ is bounded by $|A(K_0)_{tor,p}|^{d^j}$ which is independent of $L'$.
This implies that the orders $|\coh^i(K_0'/K,\coh^j(L'/K_0', A(L')_{tor,p}))|$, for $i+j=1,2$,
are also bounded. Then we use the
Hochschild-Serre spectral sequence
$$\coh^i(K_0'/K,\coh^j(L'/K_0', A(L')_{tor,p})) \Longrightarrow \coh^{i+j}(L'/K,A(L')_{tor,p})$$
to verify the first statement.

Now consider the case where $d=1$. Let $K_n$ be the $n$th layer of $L/K$.
Using Herbrand quotient, we see that for $F=K_n$,
$$|\coh^1(L'/F, A(L')_{tor,p})|=|\coh^2(L'/F, A(L')_{tor,p})|\leq |A(K_n)_{tor,p}|.$$
This bound increases with $n$. To find a bound independent of $n$, we first note that 
$A(L)_{tor,p}$ is cofinite over $\Z_p$ and
consider the $p$-divisible part
$A(L)_{tor,\infty}$ of $A(L)_{tor,p}$.  Let $T$ denote the finite quotient
$A(L)_{tor,p}/A(L)_{tor,\infty}$, and let $n_0$ be a positive integer
such that if $n\geq n_0$, then $A(K_n)_{tor,p}$ contains $A[p^2]\cap A(L)_{tor,p}$.
% and is sent surjectively
%onto $T$ by the projection $A(L)_{tor,p}\longrightarrow T$.

Suppose $n\geq n_0$ and $Q\in A(K_n)\cap A(L)_{tor,\infty}$.
It is easy to see that there is a $Q'\in  A(K_{n+1})\cap A(L)_{tor,\infty}$ such that $pQ'=Q$ and for every
$\sigma\in\Gal(K_{n+1}/K_n)$,
the point $P_{\sigma}:={}^{\sigma}Q'-Q'$ is contained in $A[p]\cap A(L)_{tor,\infty}$.
Note that $A[p]\cap A(L)_{tor,\infty}$ is a subgroup of $p(A(K_n)\cap A(L)_{tor,\infty})$ which is contained in ${\text{N}}_{K_{n+1}/K_n}(A(K_{n+1})\cap A(L)_{tor,\infty})$.
Also, $Q$ can be expressed as the difference
${\text{N}}_{K_{n+1}/K_n}(Q')-\sum_{\sigma\in\Gal(K_{n+1}/K_n)} P_{\sigma}$. Therefore, $Q$
is contained in ${\text{N}}_{K_{n+1}/K_n}(A(K_{n+1})\cap A(L)_{tor,\infty})$.
This shows that
$$A(K_n)\cap A(L)_{tor,\infty}\subset {\text{N}}_{K_{m}/K_n}(A(K_{m}\cap A(L)_{tor,\infty})),\;\;\text{if}\;\; m\geq n.$$
Therefore, we have, for $F=K_n$, $L'=K_m$, $m\geq n\geq n_0$,
$$|\coh^2(L'/F,A(L')_{tor,p})|=|A(K_n)_{tor,p}/{\text{N}}_{K_{m}/K_n}(A(K_{m})_{tor,p})|\leq |T|.$$
We can choose $|T|$ as the desired bound.
\end{proof}

\end{subsection}

\begin{subsection}{The proofs of Theorem \ref{t:cont} and Theorem \ref{t:bl06}}\label{subsec:cont}
We first prove that Theorem \ref{t:loccont} implies Theorem \ref{t:cont}.
Let $S(F)$ denote the set of places of $F$ sitting over $S$.
Let $L'/F$ be a finite intermediate extension of $L/F$ and put $G=\Gal(L'/F)$. For $m=1,2,...,\infty$, consider the restriction map
$$res_m:\coh^1(F,\mathcal{A}[p^m])
\longrightarrow \coh^1(L',\mathcal{A}[p^m])^G,$$
and define
$$ \Sel_{p^{\infty}}(L'/F):=
\{ \eta \in \coh^1(F,\mathcal{A}[p^{\infty}])\; | \; res_{\infty} (\eta)\in \Sel_{p^{\infty}}(L')\}.$$
Then $\Sel_{p^{\infty}}(F)\subset \Sel_{p^{\infty}}(L'/F)$ and for the restriction map
$$res_{L'/F}:\Sel_{p^{\infty}}(F)\longrightarrow \Sel_{p^{\infty}}(L')^G,$$
we have the inequalities:
\begin{equation}\label{e:kerbd}
|\ker(res_{L'/F})|\leq |\ker(res_{\infty})|,
\end{equation}
and
\begin{equation}\label{e:cokbd}
|\coker(res_{L'/F})|\leq |\coker(res_{\infty})|\cdot |\Sel_{p^{\infty}}(L'/F):\Sel_{p^{\infty}}(F)|.
\end{equation}
For every $m$ apply the Hochschild-Serre
spectral sequence (\cite{mil80}, p. 105)
$$ \coh^i(G, \coh^j(L',\mathcal{A}[p^m]))\Longrightarrow \coh^{i+j}(F, \mathcal{A}[p^m]).$$
The spectral sequence says that $\ker(res_m)$
equals $\coh^1(G, \mathcal{A}[p^m](L'))$ and $\coker(res_m)$ is isomorphic to a subgroup of
$\coh^2(G, \mathcal{A}[p^m](L'))$.
We have $\mathcal{A}[p^m](L')=A(L')[p^m]$, which equals $A(L')_{tor,p}$ for $m$ large enough.
By letting $m$ go to
$\infty$ and by applying Corollary \ref{c:glob}, we conclude that the orders $|\ker(res_{\infty})|$,
$|\coker(res_{\infty})|$ are finite and they are bounded if $d=1$ and $F$ varies.

To bound the index $|\Sel_{p^{\infty}}(L'/F):\Sel_{p^{\infty}}(F)|$, we use the exact sequence
\begin{equation}\label{e:slf}
\Sel_{p^{\infty}}(F)\longrightarrow \Sel_{p^{\infty}}(L'/F)\longrightarrow \bigoplus_v\coh^1(L_v/F_v,A(L_v)),
\end{equation}
where in the right term $v$ runs through all places of $F$.
For each $v$, let $\F_v$ be the residue field and let $m_v$ be the number of
components of the special fiber of the N$\acute{\text{e}}$ron model of $A$ at $v$.
We first note that if $v$ split completely in $L$, then the cohomology group $\coh^1(L_/K_v,A(L_v))=0$.
Then we apply
Theorem \ref{t:loccont} (for $v\in S$) together
with Proposition I.3.8 of \cite{mil86} (for $v\notin S$ and hence unramified) to show
that the index is bounded by the product
$\mathbf{B}_F:=\prod_{v\in S(F)}|{\bar A}(\F_v)_p|^{d+1}\cdot\prod_{v\notin S(F)}m_v$,
where in the second product $v$ runs through all
the places not splitting completely in $L$.
We also note that this is a finite product, since $m_v=1$ if $A$ has good reduction at $v$.
Therefore, the index is finite and the first statement of Theorem \ref{t:cont} is proved.
Moreover, if $d=1$ and $v_0$ is a place of $K$ not splitting completely in $L$, then
the decomposition group of $v_0$ is a non-trivial closed subgroup of $\Gamma\simeq\Z_p$ with finite index,
and hence the number of place of $L$ sitting over $v_0$ is finite.
This implies that the number of place of $F$ sitting over $v_0$ is bounded as $F$ varies.
Therefore, the product $\mathbf{B}_F$ is bounded, and this completes the proof of Theorem \ref{t:cont}.

To prove Theorem \ref{t:bl06} we use Nakayama lemma. We need to show that for each finite intermediate extension $L'/K\subset L/K$, the
order of the $p$-torsion subgroup of $\coker(res_{L'/K})$ is bounded. We apply Corollary \ref{c:glob} and use an argument similar to
the above. Then we reduce the proof to showing that for each $v\in S$ at which $A$ has split multiplicative reduction, the $p$-torsion subgroup of
$\coh^1(L'_v/K_v,A(L'_v))$ is bounded as $L'$ varies. For this, we use the exact sequence
$$0\longrightarrow Q^{\Z}\longrightarrow (L'_v)^*\longrightarrow A(L'_v)\longrightarrow 0,$$
where $Q$ is the local Tate's period. Hilbert's theorem 90 implies that $\coh^1(L'_v/K_v,A(L'_v))$ is isomorphic to a subgroup of
$$\coh^2(\Gal(L'_v/K_v),Q^{\Z})\simeq \coh^2(\Gal(L'_v/K_v),\Z)\simeq \coh^1(\Gal(L'_v/K_v),\Q/\Z).$$
Obviously, the order of its $p$-torsion subgroup is bounded by $p^d$.
\end{subsection}

\end{section}

\begin{section}{Inseparable Extensions}\label{s:insp}
In this section, we assume that $A$ is an abelian variety over the local field
$K=\F_q((T))$
% with $\dim A=g$
so that the reduction $\bar A$ of $A$ is an ordinary abelian variety.

\begin{subsection}{The dual abelian variety}\label{subsec:dual}
Consider a finite extension $L/K$.
The Frobenius substitution $\Frob_{p}$ induces a $G_L$-isomorphism:
\begin{equation}\label{e:fr}
\begin{array}{rcl}
\Frob_{p}:A({\bar L}^{(1/p)}) & \stackrel{\sim}{\longrightarrow} & A^{(p)}(\bar L)\\
P & \mapsto & \tF(P).\\
\end{array}
\end{equation}
We apply (\ref{e:fr}) to the equality (\ref{e:pm}) which, according to
Corollary \ref{c:list}(c), can be written as
$A^1(L^{(1/p)})=1/p A^1(L)$. We then use the
relation (\ref{e:p}) to deduce a new equality:
\begin{equation}\label{e:surj}
\V((A^{(p)})^1(L))=\V(\tF(A^1(L^{(1/p)})))=\V(\tF(1/pA^1(L)))=A^1(L).
\end{equation}
%Here the isogeny $V$ is characterized by its kernel which is
%the maximal etale subgroup of $\mathcal{A}^{(p)}[p]$.

Let $B$ denote the dual abelian variety
to $A$ over $K$. Being isogenous to $A$,
$B$ also has ordinary reduction. Let
${\hat \tF}: B^{(p)}\longrightarrow B$ be the dual
to the Frobenius isogeny $\tF$.
%Let $\mathcal{B}^{(p)}[p]$ denote
%the kernel of the multiplication by $p$ on $B^{(p)}$.
Then the kernel of $\hat {\tF}$, which is the dual of
$(\mu_p)^g$, is exactly the maximal etale subgroup of the group scheme $\mathcal{B}^{(p)}[p]$
(the kernel of the multiplication
by $p$ on $B^{(p)}$).
On the other hand, if we write $[p]_B$, the multiplication by $p$ on $B$, as the composition
$\V_B\circ \tF_B$, then $\V_B$ is separable and hence its kernel also
equals the maximal etale subgroup of $\mathcal{B}^{(p)}[p]$. In view of these, we see that
$\hat \tF=\V_B\circ \Phi$, for some isomorphism $\Phi:B^{(p)}\longrightarrow B^{(p)}$.
In particular, we have ${\hat \tF}((B^{(p)})^1(L))=\V_B((B^{(p)})^1(L))$.

By letting $B$ play the role of $A$ in (\ref{e:surj}), we prove the following.

\begin{lemma}\label{e:surjb} The map
$${\hat \tF}\mid_{(B^{(p)})^1(L)}:(B^{(p)})^1(L)\longrightarrow B^1(L)$$
is surjective.
\end{lemma}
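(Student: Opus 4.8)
The plan is to transport the surjectivity recorded in (\ref{e:surj}) for $A$ over to the dual abelian variety $B$, and then to replace the isogeny $\V_B$ by $\hat{\tF}$ by means of the factorization established just above the statement. Since $B$ is isogenous to $A$, it too has ordinary reduction, so every ingredient entering the derivation of (\ref{e:surj})---namely Corollary \ref{c:list}(c), the identity (\ref{e:pm}) in the form $B^1(L^{(1/p)})=\frac{1}{p}B^1(L)$, the Frobenius-substitution isomorphism (\ref{e:fr}) written for $B$, and the decomposition $[p]_B=\V_B\circ\tF_B$---applies to $B$ word for word. Running the same chain of equalities with $B$ in place of $A$ therefore yields
\[
\V_B\bigl((B^{(p)})^1(L)\bigr)=B^1(L).
\]

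Next I would invoke the identity ${\hat \tF}\bigl((B^{(p)})^1(L)\bigr)=\V_B\bigl((B^{(p)})^1(L)\bigr)$, obtained above from the factorization $\hat{\tF}=\V_B\circ\Phi$ with $\Phi\colon B^{(p)}\to B^{(p)}$ an isomorphism. Combining this with the previous display gives ${\hat \tF}\bigl((B^{(p)})^1(L)\bigr)=B^1(L)$, which is precisely the asserted surjectivity. Thus the proof reduces to two facts that are either already in hand or routine transfers of earlier arguments.

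The step that deserves the most care---and which I regard as the crux---is the identity ${\hat \tF}\bigl((B^{(p)})^1(L)\bigr)=\V_B\bigl((B^{(p)})^1(L)\bigr)$. Although $\hat{\tF}$ and $\V_B$ differ only by the isomorphism $\Phi$, equality of their images on a \emph{particular} subgroup is not automatic; it rests on $\Phi$ stabilizing the subgroup $(B^{(p)})^1(L)$ of points with trivial reduction. This holds because $\Phi$ is an isomorphism of abelian varieties over $K$ and hence extends to an isomorphism of the corresponding N\'eron models, so it carries the kernel of reduction isomorphically to the kernel of reduction and in particular preserves $(B^{(p)})^1(L)$. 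Granting this, $\hat{\tF}$ and $\V_B$ have the same image on $(B^{(p)})^1(L)$, and the conclusion is immediate from the $B$-analogue of (\ref{e:surj}).
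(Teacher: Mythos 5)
Your proposal is correct and takes essentially the same route as the paper: it transfers (\ref{e:surj}) to $B$ (legitimate because $B$, being isogenous to $A$, also has good ordinary reduction) and then converts $\V_B$ into $\hat{\tF}$ via the factorization $\hat{\tF}=\V_B\circ\Phi$ with $\Phi$ an isomorphism. Your N\'eron-model argument that $\Phi$ stabilizes $(B^{(p)})^1(L)$ correctly fills in the one detail the paper leaves implicit in its ``In particular''.
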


\end{subsection}
\begin{subsection}{The local duality}\label{subsec:loc}
Via the Poincar$\acute{\text{e}}$ biextension $W\longrightarrow A\times B$ (which is the compliment of the zero section in the
Poincar$\acute{\text{e}}$ line bundle over
$A\times B$, \cite{mum68}),
a point on $B$ is regarded as an element in $\Ext(A,\G_m)$, and hence a point
$Q\in B(L)$ gives rise to an exact sequence of $G_L$-modules:
$$0\longrightarrow {\bar L}^*\longrightarrow W_Q\longrightarrow A({\bar L})\longrightarrow 0.$$
Using the induced long exact sequence:
$$\dots\longrightarrow  \coh^1(L,A){\stackrel{\delta_Q}{\longrightarrow}}
\coh^2(L,{\bar L}^*)\longrightarrow \dots,$$
we define (cf. \cite{mil86}, Appendix C)
the local pairing of $Q$ and a class $\xi\in\coh^1(L,A)$ as
$$<\xi,Q>_{A,B,L}:=inv(\delta_Q(\xi)).$$
Here $inv:\coh^2(L,{\bar L}^*)\longrightarrow
\Q/\Z$ is the invariant of the Brauer group.

If $W_A$, $W_{A'}$ are Poincar$\acute{\text{e}}$ biextensions associated to $A$, $A'$ and
$f:A\longrightarrow A'$ and ${\hat f}:B'\longrightarrow B$ are dual isogenies,
then $(1\times {\hat f})^* W_A\simeq (f\times 1)^* W_{A'}$ (\cite{mum74}, p.130).
From this, we see that the local pairings are compatible with isogenies.
In particular, we have the commutative diagram:
\begin{equation}\label{e:dual}
\begin{array}{lccc}
<,>_{A,B,L}: & \coh^1(L,A)\times B(L) & \longrightarrow & \Q/\Z\\
{} & \tF\downarrow\;\;\;\;\;\;\;\;\; \uparrow {\hat \tF}      &      {}        & \|\\
<, >_{A^{(p)},B^{(p)},L}: & \coh^1(L,A^{(p)})\times B^{(p)}(L) & \longrightarrow & \Q/\Z.
\end{array}
\end{equation}

%for
%$Q'\in B^{(p)}(L)$ and $\xi\in\coh^1(L,A)$, we have
%\begin{equation}\label{e:dual}
%<\xi,{\hat F}(Q')>_{A,B,L}=<F(\xi), Q'>_{A^{(p)},B^{(p)},L}.
%\end{equation}

We use (\ref{e:dual}) to deduce the following (dual
statement of Lemma \ref{e:surjb}).
\begin{lemma}\label{l:inj}
Let $\tF_*:\coh^1(L,A)\longrightarrow
\coh^1(L,A^{(p)})$ be the homomorphism
induced from $\tF$. If $\xi\in\coh^1(L,A)$ and $\tF_*(\xi)$ annihilates
$(B^{(p)})^1(L)$, then $\xi$ must annihilate $B^1(L)$.
\end{lemma}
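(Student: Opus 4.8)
The plan is to deduce Lemma~\ref{l:inj} directly from Lemma~\ref{e:surjb} by exploiting the compatibility of the local pairing with the Frobenius isogeny, as encoded in the commutative diagram~(\ref{e:dual}). The key observation is that the adjointness expressed by that diagram translates the \emph{surjectivity} statement of Lemma~\ref{e:surjb} into the desired \emph{injectivity-type} (annihilation) statement; the two lemmas are genuinely dual to one another, so the proof should be a short formal argument rather than a computation.

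More precisely, first I would unwind what the hypothesis and conclusion mean in terms of the pairings. The hypothesis is that $\tF_*(\xi)\in\coh^1(L,A^{(p)})$ annihilates $(B^{(p)})^1(L)$, i.e. $\langle \tF_*(\xi),Q'\rangle_{A^{(p)},B^{(p)},L}=0$ for every $Q'\in (B^{(p)})^1(L)$. The conclusion is that $\langle \xi,Q\rangle_{A,B,L}=0$ for every $Q\in B^1(L)$. The diagram~(\ref{e:dual}) tells us that for any $Q'\in B^{(p)}(L)$,
\[
\langle \tF_*(\xi),Q'\rangle_{A^{(p)},B^{(p)},L}=\langle \xi,{\hat\tF}(Q')\rangle_{A,B,L}.
\]
So the hypothesis says exactly that $\xi$ annihilates the image ${\hat\tF}\big((B^{(p)})^1(L)\big)\subset B^1(L)$.

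The crucial step is then to invoke Lemma~\ref{e:surjb}, which asserts that the restriction of ${\hat\tF}$ to $(B^{(p)})^1(L)$ \emph{surjects onto} $B^1(L)$. Combining this with the identity above, the image ${\hat\tF}\big((B^{(p)})^1(L)\big)$ is all of $B^1(L)$, and therefore the vanishing of $\langle\xi,\cdot\rangle_{A,B,L}$ on that image is precisely the vanishing of $\langle\xi,\cdot\rangle_{A,B,L}$ on all of $B^1(L)$. This is exactly the conclusion we want, so the lemma follows.

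I expect the only subtlety — and hence the main point requiring care — to be confirming that the compatibility diagram~(\ref{e:dual}) is set up with the correct variance, so that pairing $\tF_*(\xi)$ against $Q'$ really equals pairing $\xi$ against ${\hat\tF}(Q')$ (and not some adjoint going the wrong direction). Since $\tF\colon A\to A^{(p)}$ and ${\hat\tF}\colon B^{(p)}\to B$ are a dual pair of isogenies, the adjunction $\langle f_*\xi,Q'\rangle=\langle\xi,{\hat f}(Q')\rangle$ is exactly the functoriality recorded just before the diagram, so no genuine obstacle arises; the argument is entirely formal once this adjointness is read off correctly.
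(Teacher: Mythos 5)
Your proposal is correct and is exactly the argument the paper intends: the commutativity of diagram~(\ref{e:dual}) gives the adjunction $\langle \tF_*\xi,Q'\rangle_{A^{(p)},B^{(p)},L}=\langle \xi,{\hat\tF}(Q')\rangle_{A,B,L}$, and the surjectivity of ${\hat\tF}$ on $(B^{(p)})^1(L)$ from Lemma~\ref{e:surjb} then converts the hypothesis into the conclusion. The paper leaves this deduction implicit (it merely remarks that the lemma is the ``dual statement'' of Lemma~\ref{e:surjb} via~(\ref{e:dual})), and your write-up fills in precisely those details, including the correct variance of the pairing.
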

We also have the commutative diagram induced from the Frobenius substitution:
$$
\begin{array}{lccc}
<,>_{A,B,L^{(1/p)}}: & \coh^1(L^{(1/p)},A)\times B(L^{(1/p)}) & \longrightarrow & \Q/\Z\\
{} & \Frob_{p*}\downarrow\;\;\;\;\;\;\;\;\; \downarrow \Frob_p      &      {}        & \|\\
<, >_{A^{(p)},B^{(p)},L}: & \coh^1(L,A^{(p)})\times B^{(p)}(L) & \longrightarrow & \Q/\Z.
\end{array}
$$

\begin{corollary}\label{c:inj1}
Let $\tF_{**}:\coh^1(L,A)\longrightarrow \coh^1(L^{(1/p)},A)$
be the homomorphism
induced from the inclusion $A({\bar L})\hookrightarrow
A({\bar L}^{(1/p)})$. If $\xi\in\coh^1(L,A)$ and $\tF_{**}(\xi)$ annihilates
$B^1(L^{(1/p)})$, then $\xi$ must annihilate $B^1(L)$.
\end{corollary}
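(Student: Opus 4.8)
The plan is to deduce the corollary from Lemma \ref{l:inj} by rewriting $\tF_{**}$ in terms of $\tF_*$ via the Frobenius substitution. The basic link is the module-level factorization
$$\tF_* = \Frob_{p*}\circ \tF_{**},$$
which I would check directly: $\tF_{**}$ comes from the inclusion $A(\bar L)\hookrightarrow A(\bar L^{(1/p)})$, and $\Frob_{p*}$ comes from the isomorphism $\Frob_p:A(\bar L^{(1/p)})\to A^{(p)}(\bar L)$, $P\mapsto\tF(P)$, of (\ref{e:fr}); their composite sends $P\in A(\bar L)$ to $\tF(P)$, which is precisely the $G_L$-map inducing $\tF_*$. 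All Galois-group identifications here are the ones made via $\Frob_p$ in the notation section, so they are consistent.

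Feeding this into the Frobenius-substitution diagram displayed just before the corollary, for any $\xi\in\coh^1(L,A)$ and $Q\in B(L^{(1/p)})$ I obtain
$$<\tF_{**}(\xi),Q>_{A,B,L^{(1/p)}} = <\Frob_{p*}(\tF_{**}(\xi)),\Frob_p(Q)>_{A^{(p)},B^{(p)},L} = <\tF_*(\xi),\Frob_p(Q)>_{A^{(p)},B^{(p)},L}.$$
Thus the hypothesis that $\tF_{**}(\xi)$ annihilates $B^1(L^{(1/p)})$ says exactly that $\tF_*(\xi)$ annihilates the image $\Frob_p(B^1(L^{(1/p)}))$.

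It remains to identify that image. Applying the $B$-analogue of (\ref{e:fr}) and noting that $\Frob_p$ commutes with reduction, the $G_L$-isomorphism $\Frob_p$ carries the trivial-reduction subgroup $B^1(L^{(1/p)})$ onto $(B^{(p)})^1(L)$; this is the same identity (now for $B$) that underlies the computation (\ref{e:surj}). Hence $\tF_*(\xi)$ annihilates $(B^{(p)})^1(L)$, and Lemma \ref{l:inj} then gives that $\xi$ annihilates $B^1(L)$, which is the assertion of the corollary.

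The steps that require genuine care, and where I expect the only friction, are the factorization $\tF_* = \Frob_{p*}\circ\tF_{**}$ together with the bookkeeping of the $\Frob_p$-identifications of the Galois groups, and the claim $\Frob_p(B^1(L^{(1/p)})) = (B^{(p)})^1(L)$ asserting that the Frobenius substitution respects the subgroups of points with trivial reduction. Neither is deep, but both are exactly where a direction- or compatibility-error would hide.
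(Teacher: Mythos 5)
Your proof is correct and is essentially the paper's own argument: the paper's entire proof consists of the observation that $\tF_{**}=\Frob_{p*}^{-1}\circ\tF_*$, which is exactly your factorization $\tF_*=\Frob_{p*}\circ\tF_{**}$, followed implicitly by the diagram chase and the appeal to Lemma \ref{l:inj}. The two points you flag as needing care (the factorization with the $\Frob_p$-identification of Galois groups, and $\Frob_p(B^1(L^{(1/p)}))=(B^{(p)})^1(L)$) are precisely the details the paper leaves unstated, and your verification of them is sound.
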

\begin{proof}
We observe that $\tF_{**}$ is just the composition $\Frob_{p*}^{-1}\circ \tF_*$.
%where
%$$\Frob_{p*}:\coh^1(L^{(1/p)},A)\stackrel{\sim}{\longrightarrow} \coh^1(L,A^{(p)})$$
%is the group isomorphism .
\end{proof}
Using Tate's local duality theorem (\cite{t62}, \cite{mil70}) which says that the local pairing is non-degenerate
and it identifies $\coh^1(L,A)$ with the Pontryagin dual of $B(L)$,
we identify the
annihilators of $B^1(K)$ with the dual group ${\widehat {\bar B}(\F_q)}$ of ${\bar B}(\F_q)={\bar B}(\F_K)$.
\begin{corollary}\label{c:inj2}
The kernel of the homomorphism
$$i_*:\coh^1(K,A)=\coh^1(G_K,A({\bar K}))\longrightarrow
\coh^1(G_K,A({\bar K}^{(1/p^{\infty})}))$$
induced from the inclusion
$A({\bar K})\longrightarrow A({\bar K}^{(1/p^{\infty})})$
is contained in ${\widehat {\bar B}(\F_q)}$.
\end{corollary}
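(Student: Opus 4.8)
The plan is to reduce the statement to the single-step result of Corollary \ref{c:inj1} by climbing down the tower of purely inseparable extensions, and then to invoke the Tate-duality identification recorded just above the corollary.

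First I would rewrite the target group as a direct limit. Since $\bar K^{(1/p^\infty)} = \bigcup_n \bar K^{(1/p^n)}$ and the cohomology of the profinite group $G_K$ commutes with direct limits of discrete modules, we have
$$\coh^1(G_K, A(\bar K^{(1/p^\infty)})) = \varinjlim_n \coh^1(G_K, A(\bar K^{(1/p^n)})).$$
Using the Frobenius substitution to identify $G_K$ with $G_{K^{(1/p^n)}}$, each term equals $\coh^1(K^{(1/p^n)}, A)$, and the transition map from level $n-1$ to level $n$ is precisely the map $\tF_{**}$ of Corollary \ref{c:inj1} applied with $L = K^{(1/p^{n-1})}$. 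Consequently a class $\xi \in \coh^1(K, A)$ lies in $\ker(i_*)$ if and only if its image $\xi_n$ in $\coh^1(K^{(1/p^n)}, A)$ vanishes for some $n$.

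Next I would run a downward induction along the tower. Fix such an $n$ and let $\xi_j$ denote the image of $\xi$ in $\coh^1(K^{(1/p^j)}, A)$, so that $\tF_{**}(\xi_{j-1}) = \xi_j$. Since $\xi_n = 0$, it certainly annihilates $B^1(K^{(1/p^n)})$. Applying Corollary \ref{c:inj1} successively with $L = K^{(1/p^{j-1})}$ for $j = n, n-1, \dots, 1$, the hypothesis that $\tF_{**}(\xi_{j-1}) = \xi_j$ annihilates $B^1(K^{(1/p^j)})$ yields that $\xi_{j-1}$ annihilates $B^1(K^{(1/p^{j-1})})$. After $n$ steps we arrive at $\xi_0 = \xi$ annihilating $B^1(K)$.

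Finally, by the identification furnished by Tate's local duality---namely that the annihilator of $B^1(K)$ inside $\coh^1(K, A)$ is exactly $\widehat{\bar B}(\F_q)$---we conclude that $\xi \in \widehat{\bar B}(\F_q)$, as required. I expect the step needing the most care to be the first one: checking that the $G_K$-action on $A(\bar K^{(1/p^n)})$ implicit in the definition of $i_*$ agrees, under the Frobenius identification, with the natural $G_{K^{(1/p^n)}}$-action, so that the transition maps in the direct limit genuinely are the maps $\tF_{**}$ of Corollary \ref{c:inj1}. Once this bookkeeping is settled, the remainder is a mechanical descent through the tower.
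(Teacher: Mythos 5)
Your proof is correct and takes essentially the same route as the paper: the paper's entire proof is the one-line remark that the corollary follows by inductively applying Corollary \ref{c:inj1} with $L=K^{(1/p^m)}$ for $m=0,1,\ldots$, combined with the Tate-duality identification of the annihilator of $B^1(K)$ with ${\widehat {\bar B}(\F_q)}$ stated just before the corollary. Your write-up simply makes explicit what the paper leaves implicit---the direct-limit argument showing that a class killed in $\coh^1(G_K,A({\bar K}^{(1/p^{\infty})}))$ already dies at some finite level $K^{(1/p^n)}$, and the downward induction through the tower---and your concern about the Frobenius identification of $G_{K^{(1/p^n)}}$ with $G_K$ is harmless, since that identification agrees with restriction of automorphisms to ${\bar K}$.
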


\begin{proof}
The corollary is proved by inductively applying Corollary \ref{c:inj1}
to the cases where $L=K^{(1/p^m)}$ for $m=0,1,...$.
\end{proof}

Note: It can be shown that the kernel actually equals ${\widehat {\bar B}(\F_q)}$,
but we do not need it here.

\end{subsection}

\begin{subsection}{Kummer theory}\label{subsec:Kum}
Over the field ${\bar K}^{(1/p^{\infty})}$,
we are able to establish the related Kummer theory,
because we have the exact sequence of $G_K$-modules
$$0\longrightarrow A[p^m]\stackrel{j}{\longrightarrow}
A({\bar K}^{(1/p^{\infty})})\stackrel{[p^m]}{\longrightarrow} A({\bar K}^{(1/p^{\infty})})
\longrightarrow 0.
$$
Furthermore, in the exact sequence we can replace $A[p^m]$ by
${\bar A}[p^m]$ (Lemma \ref{l:lift}).
By taking the direct limit over $m$ of the induced Kummer sequence, we get
the following exact sequence:
$$
%0\longrightarrow 
A(K^{(1/p^{\infty})})\otimes_{\Z} \Q_p/\Z_p
\rightarrowtail
\coh^1(G_K, {\bar A}[p^{\infty}])\stackrel{j_*}{\twoheadrightarrow}
\coh^1(G_K, A(K^{(1/p^{\infty})}))_p
%\longrightarrow 0,
$$
where $\coh^1(G_K, A(K^{(1/p^{\infty})}))_p$ denotes the $p$-primary part
of the group
$\coh^1(G_K, A(K^{(1/p^{\infty})}))$.
Now equations (\ref{e:pm}) and (\ref{e:dp}) together actually imply
\begin{equation}\label{e:ker}
A(K^{(1/p^{\infty})})\otimes_{\Z} \Q_p/\Z_p=A(K^{(1/p^{\infty})})_p\otimes_{\Z} \Q_p/\Z_p=0.
\end{equation}
If we set
$k_*=  j_*^{-1}\circ i_*:\coh^1(K,A)_p \longrightarrow \coh^1(G_K, {\bar A}[p^{\infty}])$,
then, by Corollary \ref{c:inj2},
\begin{equation}\label{e:kerk}
\ker(k_*)\subset {\widehat {\bar B}(\F_q)}
\end{equation}

\end{subsection}

\begin{subsection}{The proof of Theorem \ref{t:loccont}}\label{subsec:t1}
First, we restrict
the map $k_*$ to the finite $p$-group
$\coh^1(L'/K, A(L'))$. Because of the inclusion (\ref{e:kerk}), the kernel is also
contained in
${\widehat {\bar B}(\F_q)}_p$ whose order equals $|{\bar A}(\F_q)_p|$.
On the other hand, by applying the equality (\ref{e:ker}) (for the case where $K=L'$),
we can deduce that the image is contained in
$\coh^1(\Gal(L'/K),{\bar A}(\F_{L'})_p)$. According to
Corollary \ref{c:list}(e), the $\Gal(L'/K)$-module ${\bar A}(\F_{L'})_p$
equals $A(L'^{(1/p^m)})_{tor,p}$ for large enough $m$.
In view of this, Corollary \ref{c:loc} implies
$$|\coh^1(\Gal(L'/K),{\bar A}(\F_{L'})_p)|
\leq |A(K^{(1/p^m)})_{tor,p}|^d=|{\bar A}(\F_q)_p|^d.$$
And the proof is completed.

\end{subsection}
\end{section}

\end{document}